\newtheorem{thm}{Theorem}[section]
\newtheorem{lem}[thm]{Lemma}
\newtheorem{prop}[thm]{Proposition}
\theoremstyle{definition}
\newtheorem{defn}[thm]{Definition}
\theoremstyle{remark}
\numberwithin{equation}{section}
\newtheorem{exa}[thm]{Example}
\newcommand{\h}{\mathcal{H}}
\begin{document}

\title[Invariance of Fr\'echet frames under perturbation]
{Invariance of Fr\'echet frames under perturbation}%
\author{ Asghar Rahimi}
\address{Department of mathematics ,University of Maragheh, Maragheh, Iran}
 \email{asgharrahimi@yahoo.com}

\subjclass[2000]{ 42C15, 46A45} \keywords{Frame, Banach frame,
Fr\'echet frames, Fr\'echet spaces, Perturbation.}
\begin{abstract}

Motivating the perturbations of frames in Hilbert and Banach
spaces, in this paper we introduce the invariance of Fr\'echet
frames under perturbation. Also we show that for any Fr\'echet
spaces, there is a Fr\'echet frame and any element has a series
expansion.
\end{abstract}
\maketitle
\section{Introduction}
 Historically, theory of frames
appeared in the paper of Duffin and Schaeffer  in 1952. Around
1986, Daubechies, Grossmann, Meyer and others reconsidered ideas
of Duffin and Schaeffer \cite{DS}, and started to develop the
wavelet and frame theory. Frames for Banach spaces were introduced
by K. Gr\"{o}chenig \cite{gr} and subsequently many mathematicians
have contributed to this theory
\cite{AlST,CaChSt,CaHaLa,rahimibook}. There are some complete
spaces which are not Banach spaces ( like Fr\'echet spaces). This
was the main motivation for frames on Fr\'echet spaces. The
concept of Fr\'echet frames investigated by Pilipovi\'c and Stoeva
in \cite{phst1,phst2}. Like Hilbert and Banach spaces, we will
show that for any Fr\'echet space we can find a Fr\'echet frame
and every element in a Fr\'echet space has a series expansion
\cite{CaChSt}.

In this manuscript  we are interested in the problem of finding
conditions under which the perturbation of a Fr\'echet frame  is
also a Fr\'echet frame. The following result \cite{caschr98}, is
one of the most general and also typical results about frame
perturbations for the whole space $\h$ which generalizes the main
results in \cite{ole1,ole2}.
\begin{thm}
\cite{caschr98} Let $F :=\{f_i \}_{i\in I}$ be a frame for $\h$
with bounds $A$ and $B$, and $G:=\{g_i\}_{i\in I}$ is  a sequence
in $\h$. Suppose that there exist non-negative
$\lambda_1,\lambda_2$ and $\mu$  with $\lambda_2<1$ such that
\[
\|\sum_{i}c_i(f_i -g_i)\|\leq\lambda_1\|\sum_{i}c_i
f_i\|+\lambda_2\|\sum_{i}c_i g_i\|+\mu\|c\|
\]
for each finitly supported $c\in\ell^2( \mathbb{N}) $ and
$\lambda_1+ \frac{\mu}{\sqrt{A} }<1  $. Then $G$ is a frame for
$\h$ with bounds
$$
A(1-\frac{\lambda_1 +\lambda_2+\frac{\mu}{\sqrt{A}}
}{1+\lambda_2})^{2} \quad and\quad B(1+\frac{\lambda_1
+\lambda_2+\frac{\mu}{\sqrt{B}} }{1-\lambda_2})^{2}.
$$

\end{thm}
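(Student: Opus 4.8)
The plan is to pass from the two sequences to their synthesis operators and read off the two frame bounds as a norm bound and a below-bound for the perturbed operator. Let $U,V\colon \ell^2(\mn)\to\h$ be the synthesis maps $Uc=\sum_i c_if_i$ and $Vc=\sum_i c_ig_i$ (the latter a priori only on finitely supported $c$). Since $F$ is a frame, $U$ is bounded with $\|U\|\le\sqrt B$, the frame operator $S=UU^\ast$ satisfies $A\,\mathrm{Id}\le S\le B\,\mathrm{Id}$, and $U^\dagger:=U^\ast S^{-1}$ is a right inverse, $UU^\dagger=\mathrm{Id}$, with $\|U^\dagger\|\le 1/\sqrt A$. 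The hypothesis is exactly the operator inequality $\|(U-V)c\|\le\lambda_1\|Uc\|+\lambda_2\|Vc\|+\mu\|c\|$, and everything is extracted from it by the triangle inequality used in the two directions.

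For the upper bound I would first note $\|Vc\|\le\|Uc\|+\|(U-V)c\|\le(1+\lambda_1)\|Uc\|+\lambda_2\|Vc\|+\mu\|c\|$, so that $\lambda_2<1$ gives $\|Vc\|\le\frac{(1+\lambda_1)\|Uc\|+\mu\|c\|}{1-\lambda_2}\le\frac{(1+\lambda_1)\sqrt B+\mu}{1-\lambda_2}\|c\|$. Hence $V$ extends to a bounded operator, $\{g_i\}$ is Bessel, and its optimal Bessel (upper frame) bound is $\|V\|^2$. Rewriting $\frac{(1+\lambda_1)\sqrt B+\mu}{1-\lambda_2}=\sqrt B\,\bigl(1+\frac{\lambda_1+\lambda_2+\mu/\sqrt B}{1-\lambda_2}\bigr)$ reproduces exactly the claimed upper bound $B\bigl(1+\frac{\lambda_1+\lambda_2+\mu/\sqrt B}{1-\lambda_2}\bigr)^2$.

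For the lower bound I would run the triangle inequality the other way: $\|Uc\|-\|Vc\|\le\|(U-V)c\|$ yields $(1-\lambda_1)\|Uc\|-\mu\|c\|\le(1+\lambda_2)\|Vc\|$, i.e. $\|Vc\|\ge\frac{(1-\lambda_1)\|Uc\|-\mu\|c\|}{1+\lambda_2}$. Now feed in the canonical coefficients $c=U^\dagger f$, for which $Uc=f$ and $\|c\|\le\|f\|/\sqrt A$; this gives $\|VU^\dagger f\|\ge\kappa\|f\|$ with $\kappa=\frac{1-\lambda_1-\mu/\sqrt A}{1+\lambda_2}=1-\frac{\lambda_1+\lambda_2+\mu/\sqrt A}{1+\lambda_2}$, which is strictly positive precisely because $\lambda_1+\mu/\sqrt A<1$. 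It then remains to convert this below-bound for $VU^\dagger$ into the frame lower estimate $\sum_i\abs{\langle f,g_i\rangle}^2=\|V^\ast f\|^2\ge A\kappa^2\|f\|^2$; this is the point at which one must show that the perturbation leaves the synthesis map onto, so that $\{g_i\}$ is genuinely a frame with the advertised lower bound $A\bigl(1-\frac{\lambda_1+\lambda_2+\mu/\sqrt A}{1+\lambda_2}\bigr)^2$.

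I expect this surjectivity step to be the real obstacle. The difficulty is the mixed term $\lambda_2\|Vc\|$, which couples the unknown operator to itself: the clean below-bound controls $V$ only on $\mathrm{ran}\,U^\dagger=(\ker U)^\perp$, and one must still rule out that $\overline{\mathrm{ran}\,V}$ is a proper subspace of $\h$. The natural remedy is a stability-of-surjectivity lemma for relatively bounded perturbations: since $U$ is onto with $\|U^\dagger\|\le 1/\sqrt A$, write $VU^\dagger=\mathrm{Id}-(U-V)U^\dagger$ and attempt to solve $Vc=f$ by the iteration $h_0=f$, $c_n=U^\dagger h_n$, $h_{n+1}=(U-V)c_n$, estimating $\|h_{n+1}\|$ through the hypothesis so that $V\bigl(\sum_n c_n\bigr)=f$. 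Controlling this iteration, equivalently checking that the relevant map is a contraction and that the resulting right inverse of $V$ has norm $\le 1/(\sqrt A\,\kappa)$, is the delicate part, and it is exactly here that the normalization $\|U^\dagger\|\le 1/\sqrt A$ together with the assumptions $\lambda_2<1$ and $\lambda_1+\mu/\sqrt A<1$ must be used in full.
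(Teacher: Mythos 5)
Your reduction to the synthesis operators $U,V$ and your upper-bound computation are correct, and the overall route is exactly that of Casazza--Christensen in \cite{caschr98} (the paper at hand only quotes the theorem without proof, but its Lemma 3.1 is precisely the key ingredient of that proof). The genuine gap is in your final step: the Neumann-type iteration you propose for surjectivity does not converge under the stated hypotheses. In your scheme $Vc_n=h_n-h_{n+1}$, and you need $\|h_{n+1}\|=\|(U-V)U^\dagger h_n\|\leq\theta\|h_n\|$ with $\theta<1$. But the best the hypothesis yields is obtained by combining $\|(U-V)U^\dagger h\|\leq(\lambda_1+\mu/\sqrt{A})\|h\|+\lambda_2\|VU^\dagger h\|$ with $\|VU^\dagger h\|\leq\|h\|+\|(U-V)U^\dagger h\|$, which gives $\theta=\frac{\lambda_1+\lambda_2+\mu/\sqrt{A}}{1-\lambda_2}$. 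This is $<1$ only if $\lambda_1+2\lambda_2+\mu/\sqrt{A}<1$, which is strictly stronger than the stated assumptions $\lambda_2<1$ and $\lambda_1+\mu/\sqrt{A}<1$; for instance $\lambda_1=\mu=0$, $\lambda_2=0.9$ gives $\theta=9$. So your contraction argument proves only a weaker theorem, and "checking that the relevant map is a contraction" is not a delicate point to be verified but a step that actually fails.

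The correct completion is the Hilding-type perturbation lemma, which appears in this paper as Lemma 3.1 (cited to Kato): setting $T=VU^\dagger$, your own computation shows $\|f-Tf\|\leq(\lambda_1+\mu/\sqrt{A})\|f\|+\lambda_2\|Tf\|$ with both coefficients in $[0,1)$, and the lemma then yields that $T$ is invertible with $\|T^{-1}\|\leq\frac{1+\lambda_2}{1-\lambda_1-\mu/\sqrt{A}}$. Hence $V$ has the right inverse $U^\dagger T^{-1}$ of norm at most $1/(\sqrt{A}\,\kappa)$ with your $\kappa$, and your duality step $\|V^*f\|\geq\|f\|/\|U^\dagger T^{-1}\|\geq\sqrt{A}\,\kappa\|f\|$ finishes the lower bound exactly as advertised. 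Note that Lemma 3.1 itself cannot be proved by a geometric series for the same reason as above (one would again need $\lambda_1+2\lambda_2<1$); its standard proof is a homotopy/continuity argument along $T_t=I-t(I-T)$, $t\in[0,1]$, using the uniform lower bound $\|T_tf\|\geq\frac{1-\lambda_1'}{1+\lambda_2}\|f\|$ to show that the set of $t$ for which $T_t$ is surjective is both open and closed. So you correctly located where the difficulty lives --- the mixed term $\lambda_2\|Vc\|$ and the passage from bounded below on a subspace to surjectivity --- but the tool you reached for is too weak; with Lemma 3.1 substituted for your iteration, the rest of your argument is sound and reproduces both claimed bounds.
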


\section{Fr\'echet frames}
Throughout this paper, $(X, \|.\|)$ is a Banach space and $(X^*,
\|.\|^*)$ is its dual, $(\Theta, \||.\||)$ is a Banach sequence
space and $(\Theta^*, \||.\||^*)$ is the dual of $\Theta$.
\begin{defn}
A sequence space $X_d$ is called a Banach space of scalar valued
sequences or briefly a BK-space, if it is a Banach space and the
coordinate functionals are continuous on $X_d$ , i.e., the
relations $x_n = \{\alpha_j^{(n)}\}, x = \{\alpha_j\}\in X_d , lim
x_n=x$ imply $lim \alpha_j^{(n)}=\alpha_j$.
\end{defn}

\begin{defn}
 Let $X$ be a
Banach space and $X_d$ be a BK-space. A countable family
$\{g_i\}_{i\in I}$ in the dual $X^*$ is called an $X_d$ -frame for
$X$ if
\begin{enumerate}
\item $\{g_i(f )\}_{i=1}^{\infty}\in X_d , \forall f\in X$;
 \item  the norms $\|f\|_{X}$ and $\|\{g_i(f )\}_{i=1}^{\infty}\|_{X_d}$ are
 equivalent, i.e., there exist constants
$A,B > 0$ such that

\[
 A\|f\|_{X}\leq \|\{g_i(f )\}_{i=1}^{\infty}\|_{X_d}\leq B\|f\|_{X}, \quad\forall f\in X
 \]
 \end{enumerate}
 $A$ and $B$ are called $X_d$ -frame bounds. If at least (1) and the
upper condition in (2) are satisfied, $\{g_i\}_{i=1}^{\infty}$ is
called an $X_d$ -Bessel sequence for $X$.
\end{defn}
If $X$ is a Hilbert space and $X_d = \ell^2$, (2) means that
$\{g_i\}_{i=1}^{\infty}$ is a frame, and in this case it is well
known that there exists a sequence $\{f_i\}_{i=1}^{\infty}$ in $X$
such that $f =\sum_{i=1}^{\infty}\langle f,f_i\rangle g_i =
\sum\langle f,g_i\rangle f_i$.

Similar reconstruction formulas are not always available in the
Banach space setting. This is the reason behind the following
definition:
\begin{defn}\label{1.3}
 Let $X$ be a Banach space and $X_d$ a sequence space. Given a bounded
linear operator $S : X_d\to X$, and an $X_d$ -frame
$\{g_i\}\subset X^*$, we say that $(\{g_i\}_{i=1}^{\infty},S)$ is
a Banach frame for $X$ with respect to $X_d$ if
\begin{equation}\label{ban} S(\{g_i(f
)\}=f,\quad \forall f\in  X.\end{equation}
\end{defn}
 Note that (\ref{ban}) can be
considered as some kind of generalized reconstruction formula, in
the sense that it tells how to come back to $f\in X$ based on the
coefficients $\{g_i(f )\}_{i=1}^{\infty}$.

There is a relationship between these definitions, a Banach frame
is an atomic decomposition if and only if the unit vectors form a
basis for the space $X_d$. The following Proposition states this
result.
\begin{prop}\cite{CaHaLa} Let $X$ be a Banach space and
$X_d$ be a BK-space. Let $\{y_i\}_{i=1}^{\infty}\subseteq X^*$ and
$S:X_d\to X$ be given. Let $\{e_i\}_{i=1}^{\infty}$ be the unit
vectors in $X_d$. Then the following are equivalent:
\begin{enumerate}
\item $(\{y\}_i\}_{i=1}^{\infty}, S)$ is a Banach frame for $X$
with respect to $X_d$ and $\{e_i\}_{i=1}^{\infty}$ is a Schauder
basis for $X_d$. \item $(\{y_i\}_{i=1}^{\infty},
\{S(e_i)\}_{i=1}^{\infty})$ is an atomic decomposition for $X$
with respect to $X_d$.
\end{enumerate}
\end{prop}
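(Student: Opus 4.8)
The plan is to prove the two implications separately, after isolating the ingredient they share. In both statements the family $\{y_i\}$ is required to be an $X_d$-frame for $X$, so the membership $\{y_i(f)\}\in X_d$ and the two-sided norm estimate $A\|f\|\le\|\{y_i(f)\}\|_{X_d}\le B\|f\|$ are available in either case; what must be matched up is the reconstruction mechanism. I would first record the elementary reduction that, in a BK-space, the unit vectors $\{e_i\}$ form a Schauder basis exactly when every $c=\{c_i\}\in X_d$ is the norm limit of its truncations $\sum_{i\le N}c_i e_i$: uniqueness of the coefficients in any expansion $c=\sum_i a_i e_i$ is automatic, since continuity of the coordinate functionals forces $a_i=c_i$, so the basis property collapses to convergence of the sectional truncations. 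I also recall that $(\{y_i\},\{x_i\})$ being an atomic decomposition of $X$ with respect to $X_d$ means precisely that the $X_d$-frame inequalities hold and $f=\sum_i y_i(f)x_i$ for every $f\in X$.

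For the direction $(1)\Rightarrow(2)$, assume $(\{y_i\},S)$ is a Banach frame and $\{e_i\}$ is a Schauder basis. Fix $f\in X$ and set $c=\{y_i(f)\}\in X_d$. By the basis property together with the coordinate identification above, $c=\sum_i y_i(f)e_i$ in $X_d$, and applying the bounded linear operator $S$ term by term gives $S(c)=\sum_i y_i(f)S(e_i)$. Since $(\{y_i\},S)$ is a Banach frame, $S(c)=S(\{y_i(f)\})=f$, so
\[
f=\sum_{i} y_i(f)\,S(e_i)\qquad(\forall f\in X).
\]
Combined with the inherited $X_d$-frame estimates, this is exactly the assertion that $(\{y_i\},\{S(e_i)\})$ is an atomic decomposition with atoms $x_i=S(e_i)$. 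This direction is routine.

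The substance lies in $(2)\Rightarrow(1)$. Writing $x_i=S(e_i)$, the hypothesis supplies the norm equivalence (so $\{y_i\}$ is already an $X_d$-frame and the analysis map $U\colon f\mapsto\{y_i(f)\}$ is an isomorphism onto its range $U(X)\subseteq X_d$) together with $f=\sum_i y_i(f)x_i$ for all $f$. Two things remain to be produced: that $\{e_i\}$ is a Schauder basis, and that $S(\{y_i(f)\})=f$. The second is immediate once the first is known, since then $\{y_i(f)\}=\sum_i y_i(f)e_i$ in $X_d$ and continuity of $S$ yields $S(\{y_i(f)\})=\sum_i y_i(f)S(e_i)=\sum_i y_i(f)x_i=f$. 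Thus everything reduces to deriving the Schauder-basis property of $\{e_i\}$ from the reconstruction identity and the frame bounds. Observe that the truncated synthesis satisfies $\sum_{i\le N}y_i(f)x_i=S\big(\sum_{i\le N}y_i(f)e_i\big)=S(P_N Uf)$, where $P_N$ denotes truncation, so the reconstruction says only that $S(P_N Uf)\to f$ in $X$.

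I expect this last reduction to be the main obstacle. The reconstruction delivers convergence in $X$ of the synthesized series for coefficient sequences in the range $U(X)$, whereas the Schauder-basis claim demands convergence in $X_d$ of $P_N c$ to $c$ for every $c\in X_d$. Bridging this gap genuinely uses the global boundedness of the given operator $S$, and not merely the reconstruction on $U(X)$: indeed, were $S$ only defined on the range one could not rule out pathologies such as an atomic-type decomposition of $c_0$ with respect to $\ell^\infty$, whose unit vectors are not a basis — the obstruction there being precisely the non-complementation of $c_0$ in $\ell^\infty$, i.e. the nonexistence of a bounded $S$ with $S(e_i)=e_i$. The plan is therefore to extract from boundedness of $S$, the isomorphism $U$ onto its (closed) range, and the convergence $S(P_N Uf)\to f$ both a uniform bound on the sectional projections $P_N$ and the density of finitely supported sequences in $X_d$, and then upgrade pointwise convergence on that dense set to convergence on all of $X_d$. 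Verifying the density together with the uniform boundedness of the $P_N$ is the delicate step, and is where the interplay between the BK-structure of $X_d$ and the boundedness of $S$ must be used with care.
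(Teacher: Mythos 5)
Your direction $(1)\Rightarrow(2)$ is complete and correct, and your reduction of $(2)\Rightarrow(1)$ to the single claim that the unit vectors form a Schauder basis of $X_d$ is also correct (once that is granted, $S(\{y_i(f)\})=\sum_i y_i(f)S(e_i)=f$ follows from continuity of $S$, exactly as you say). But at the decisive point your text stops being a proof and becomes a plan: you never establish the density of the finitely supported sequences in $X_d$, nor a uniform bound on the sectional projections $P_N$; you only announce the intention to ``extract'' both from the boundedness of $S$, and you yourself flag this as the delicate step. Note also that the paper offers no proof to compare against --- the proposition is quoted from \cite{CaHaLa} without argument --- so the entire burden of $(2)\Rightarrow(1)$ rests on precisely the step you leave open.

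Moreover, the plan cannot be carried out from the hypotheses as stated, because under the paper's definition of a BK-space the implication $(2)\Rightarrow(1)$ is false. Take $X=\ell^1$ and let $X_d$ be the BK-space of sequences whose odd-indexed part lies in $\ell^1$ and whose even-indexed part lies in $\ell^\infty$, with norm $\|c_{\mathrm{odd}}\|_{1}+\|c_{\mathrm{even}}\|_{\infty}$. Put $y_{2j-1}(f)=f_j$, $y_{2j}=0$, and $S(c)=c_{\mathrm{odd}}$. Then $S$ is bounded on all of $X_d$ (not merely on the range of the analysis map), $\|\{y_i(f)\}\|_{X_d}=\|f\|_{1}$, $S(e_{2j-1})=e_j$, $S(e_{2j})=0$, and $f=\sum_i y_i(f)S(e_i)$ for every $f\in\ell^1$; so statement $(2)$ holds --- indeed even the reconstruction $S(\{y_i(f)\})=f$ holds --- yet $\{e_i\}$ is not a Schauder basis of $X_d$, since the closed span of the unit vectors captures only $c_0$ in the even coordinates. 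The point is that the data in $(2)$ are blind to the part of $X_d$ lying outside the closed span of $\{e_i\}$ and outside the range of $f\mapsto\{y_i(f)\}$, so no refinement of your $P_N$ argument can recover the basis property: your belief that global boundedness of $S$ rules out such pathologies (as it does in your $c_0$-in-$\ell^\infty$ example) is exactly what this example refutes. Some additional hypothesis --- for instance that $X_d$ is a CB-space, which is what the paper assumes in its later Fr\'echet-frame results, or density of the finite sequences together with uniform sectional bounds as in a $\lambda$-BK-space --- must be imposed before $(2)\Rightarrow(1)$ can be proved. Your instinct that this was the hard direction was sound; the gap is that you left it unproved, and as stated it is unprovable.
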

It is known \cite{CaHaLa} that every separable Banach space has a
Banach frame.
\begin{thm}\label{2.5} Every separable Banach space has a Banach frame with
bounds $A=B=1$.
\end{thm}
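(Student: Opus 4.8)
The plan is to build the frame directly from a norming family of functionals, exploiting separability through the Hahn--Banach theorem. First I would fix a countable dense subset $\{f_n\}_{n=1}^\infty$ of the unit sphere of $X$, and for each $n$ use Hahn--Banach to choose $g_n \in X^*$ with $\|g_n\|^* = 1$ and $g_n(f_n) = \|f_n\|_X = 1$. The key claim is that the resulting sequence is norming, i.e.
\[
\sup_n |g_n(f)| = \|f\|_X, \qquad \forall f \in X.
\]
The upper estimate $\sup_n |g_n(f)| \le \|f\|_X$ is immediate from $\|g_n\|^* = 1$. For the reverse inequality I would take $f$ on the unit sphere and $\varepsilon > 0$, pick $f_n$ with $\|f - f_n\|_X < \varepsilon$, and compute $g_n(f) = g_n(f_n) + g_n(f - f_n) \ge 1 - \varepsilon$; letting $\varepsilon \to 0$ and rescaling by homogeneity yields the claim.

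Next I would take the sequence space to be the image of $X$ under the analysis map $Uf := \{g_n(f)\}_{n=1}^\infty$, equipped with the supremum norm, and call it $X_d$. By the norming property, $\|Uf\|_{X_d} = \sup_n |g_n(f)| = \|f\|_X$, so $U$ is a linear isometry of $X$ onto $X_d$. Because $X$ is complete and $U$ is isometric, $X_d = U(X)$ is complete, and since each coordinate functional is bounded by the supremum norm it is continuous; hence $X_d$ is a BK-space. This simultaneously verifies condition (1) of the $X_d$-frame definition, namely $\{g_n(f)\}_{n=1}^\infty \in X_d$ for every $f \in X$.

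Finally I would set $S := U^{-1} : X_d \to X$, which is well defined since $U$ is injective (being an isometry) and surjective onto $X_d$ by construction, and which is itself a bounded linear operator with $\|S\| = 1$. By definition $S(\{g_n(f)\}) = f$ for all $f \in X$, which is exactly the reconstruction identity \eqref{ban}, so $(\{g_n\}_{n=1}^\infty, S)$ is a Banach frame for $X$ with respect to $X_d$; moreover the norm identity $\|\{g_n(f)\}\|_{X_d} = \|f\|_X$ shows that the frame bounds may be taken to be $A = B = 1$.

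The step I expect to require the most care is confirming that $X_d$, defined as a concrete subspace of $\ell^\infty$, genuinely meets all the structural requirements of a BK-space, and in particular that it is complete; here this follows cleanly from $U$ being an isometry of the complete space $X$. The deliberate choice to equip the range with the supremum norm, rather than passing to its closure in $\ell^\infty$, is precisely what forces both frame bounds to equal $1$ rather than merely being comparable to $1$.
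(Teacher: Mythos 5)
Your proposal is correct and is essentially the same argument the paper relies on: a countable dense family in the unit sphere, Hahn--Banach functionals $g_n$ with $\|g_n\|^*=1$ and $g_n(f_n)=\|f_n\|$, the norming identity $\sup_n|g_n(f)|=\|f\|_X$, the sequence space taken as the isometric image of the analysis map inside $\ell^\infty$, and $S=U^{-1}$ as the reconstruction operator --- exactly the construction (from Casazza--Han--Larson, cited for Theorem \ref{2.5}) that the paper replicates seminorm-by-seminorm in its proof of the Fr\'echet-frame existence theorem. The only cosmetic point is that for complex scalars the lower estimate should read $|g_n(f)|\geq |g_n(f_n)|-|g_n(f-f_n)|\geq 1-\varepsilon$, which changes nothing.
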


The main motivation of Fr\'echet frames comes from some sequences
$\{g_i\}$ which are not Bessel sequences but they give rise to
series expansions. For Banach space $X$, let
$\{g_i\}_{i=1}^{\infty}\subseteq X^*$ be given and let there exist
$\{f_i\}_{i=1}^{\infty}\subseteq X$ such that the following series
expansion in $X$ holds
\begin{equation}\label{frech}
f =\sum_{i=1}^\infty g_i(f)f_i,\quad\forall f\in X.\end{equation}
Validity of (\ref{frech}) does not imply that
$\{g_i\}_{i=1}^{\infty}$ is a Banach frame for $X$ with respect to
the given sequence space. As one can see in the following
examples.

\begin{exa}
Let  $\{e_i\}_{i=1}^{\infty}$ be an orthonormal basis for the
Hilbert space $\h$. Consider the sequence
$\{g_i\}_{i=1}^{\infty}=\{e_1,e_1,2e_2,3e_3,4e_4,...\}$. This
sequence is not a Banach frame for $\h$ with respect to $\ell^2$.
However, the series expansion in $\h$ in the form (\ref{frech}) is
$\{f_i\}_{i=1}^{\infty}=\{e_1,0,\frac{1}{2}e_2,\frac{1}{3}e_3,
\frac{1}{4}e_4,... \}$.
\end{exa}
Validity of (\ref{frech}) implies that $\{g_i\}_{i=1}^{\infty}$ is
a Banach frame for $X$ with respect to the sequence space
$\big\{\{c_i\}_{i=1}^{\infty}:\sum _{i=1}^{\infty}c_i f_i\quad
converges\big\}.$

Recall, a complete locally convex space which has a countable
fundamental system of seminorms is called a Fr\'echet space.

 Let $\{Y_s, \|.\|_s\}_{s\in
\mathbb{N}}$ be a sequence of separable Banach spaces such that
\begin{equation}\label{ferc1}
\{0\}\neq\cap_{s\in \mathbb{N}}Y_s\subseteq ...\subseteq
Y_2\subseteq Y_1\subseteq Y_0
\end{equation}
\begin{equation}\label{ferc2}
\parallel.\parallel_{0}\leq\parallel.\parallel_{1}\leq\parallel.\parallel_{2}\leq
...
\end{equation}
\begin{equation}\label{ferc3}
Y_F:=\cap_{s\in \mathbb{N}}Y_s \quad is\quad dense\quad in\quad
Y_s \quad\forall s\in \mathbb{N}.
\end{equation}
Then $Y_F$ is a Fr\'echet space with the sequence of norms
$\|.\|_s$, $s\in\mathbb{N}$. We use the above sequences in two
cases: $Y_s=X_s$ and $Y_s=\Theta_s$. Let $\{X_s,\|.\|_s\}_{s\in
\mathbb{N}}$ and  $\{\Theta_s,\||.\||_s\}_{s\in \mathbb{N}}$ be
sequences of Banach and Banach sequence spaces, which satisfy
(\ref{ferc1})-(\ref{ferc3}). For fixed $s\in\mathbb{N}$, an
operator $V:\Theta_F\to X_F$ is $s$-bounded if there exist
constants $K_s>0$ such that $\|V\{c_i\}_{i=1}^{\infty}\|_s\leq
K_s\||\{c_i\}_{i=1}^{\infty}|\|_s$ for all
$\{c_i\}_{i=1}^{\infty}\in\Theta_F$. If $V$ is $s$-bounded for
every $s\in\mathbb{N}$, then $V$ is called $F$-bounded. Note that
an $F$-bounded operator is continuous but the converse dose not
hold in general. The book of R. Meise, D. Vogt is a very useful
text book about Fr\'echet spaces \cite{meise}.
\\
The Banach sequence space $\Theta$ is called solid if the
condition $\{c_i\}_{i=1}^{\infty}\in\Theta$ and $|d_i|\leq|c_i|$
for all $i\in\mathbb{N}$, imply that
$\{d_i\}_{i=1}^{\infty}\in\Theta$ and
$\||\{d_i\}_{i=1}^{\infty}\||_{\Theta}\leq
\||\{c_i\}\||_{\Theta}$.  A BK-space which contains all the
canonical vectors $e_i$ and for which there exists a constant
$\lambda\geq 1$ such that $$
\||\{c_i\}_{i=1}^{n}\||_{\Theta}\leq\lambda\||\{c_i\}_{i=1}^{\infty}\||_{\Theta},
\quad\forall n\in \mathbb{N},
\forall\{c_i\}_{i=1}^{\infty}\in\Theta
$$
will be called $\lambda$-BK-space. A BK-space is called a CB-space
if the set of the canonical vectors forms a basis.

\begin{defn}
Let $\{X_s,\|.\|_s\}_{s\in\mathbb{N}}$ be a family of Banach
spaces, satisfying (\ref{ferc1})-(\ref{ferc3}) and let
$\{\Theta_s,\||.|\|_s\}_{s\in\mathbb{N}}$ be a family of
$BK$-spaces, satisfying (\ref{ferc1})-(\ref{ferc3}). A sequence
$\{g_i\}_{i=1}^{\infty}\subseteq X^*_F$ is called a pre-Fr\'echet
frame ( a pre-F-frame) for $X_F$ with respect to $\Theta_F$ if for
every $s\in\mathbb{N}$ there exist constants $0<A_s\leq
B_s<\infty$ such that
\begin{equation}\label{ferc4}
\{g_i (f)\}_{i=1}^{\infty}\in\Theta_F,
\end{equation}
\begin{equation}\label{ferc5}
A_s\|f\|_s\leq\||\{g_i(f)\}_{i=1}^{\infty}|\|_s\leq B_s\|f\|_s,
\end{equation}
for all $f\in X_F$. The constants $A_s$ and $B_s$ are called lower
and upper bounds for $\{g_i\}_{i=1}^{\infty}$. The pre-F-frame is
called tight if $A_s=B_s$ for all $s\in\mathbb{N}$.
 Moreover, if there exists a $F$-bounded operator $S:\Theta_F\rightarrow X_F$ so that
$S(\{g_i(f)\}_{i=1}^{\infty})=f$ for all $f\in X_F$, then a
pre-F-frame $\{g_i\}$ is called a Fr\'echet frame ( or F-frame )
for $X_F$ with respect to $\Theta_F$ and $S$ is called an F-frame
operator of $\{g_i\}_{i=1}^{\infty}$. When (\ref{ferc4}) and at
least the upper inequality in (\ref{ferc5}) hold, then
$\{g_i\}_{i=1}^{\infty}$ is called a F-Bessel sequence for $X_F$
with respect to $\Theta_F$.
\end{defn}
Since $X_F$ is dense in $X_s$ for all $s\in\mathbb{N}$, $g_i$ has
a unique continuous extension on $X_s$, we show it by $g_i^s$.
Thus $g_i^s\in X_s^*$ and $g_{i}^s=g_i$ on $X_F$.

 The following Theorem gives some  conditions, under which an element can be expanded by some elementary vectors.
 \begin{thm}\cite{phst1,phst2}
Let $\{X_s,\|.\|_s\}_{s\in\mathbb{N}}$ be a family of Banach
spaces, satisfying (\ref{ferc1})-(\ref{ferc3}) and let
$\{\Theta_s,\||.|\|_s\}_{s\in\mathbb{N}}$ be a family of
CB-spaces, satisfying (\ref{ferc1})-(\ref{ferc3}) and we assume
that $\Theta_s^*$ is a CB-space for every $s\in\mathbb{N}$. Let
$\{g_i\}_{i=1}^{\infty}$ be a pre-F-frame for $X_F$ with respect
to $\Theta_F$. There exists a family
$\{f_i\}_{i=1}^{\infty}\subset X_F$ such that
\begin{enumerate}
\item $f=\sum_{i=1}^{\infty} g_i(f) f_i$ and
$g=\sum_{i=1}^{\infty} g(f_i)g_i$, $\forall f\in X_F $ and
$\forall g\in X_F^*$; \item $f=\sum_{i=1}^{\infty} g_i^s(f) f_i$
and $g=\sum_{i=1}^{\infty} g(f_i)g_i^s$, $\forall f\in X_s $ and
$\forall g\in X_s^*, \forall s\in\mathbb{N}$; \item for every
$s\in \mathbb{N}$, $\{f_i\}_{i=1}^{\infty}$ is a
$\Theta_s^*$-frame for $X_s^*$.
\end{enumerate}
if and only if there exists a continuous projection $U$ from
$\Theta_F$ onto its subspace $\big\{\{g_i(f)\}_{i=1}^{\infty}:f\in
X_F\big\}.$
\end{thm}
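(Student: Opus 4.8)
The plan is to route the whole argument through the analysis operator $T:X_F\to\Theta_F$ given by $Tf=\{g_i(f)\}_{i=1}^{\infty}$. Hypotheses (\ref{ferc4})--(\ref{ferc5}) say exactly that $T$ is well defined, $F$-bounded, and bounded below in each seminorm, $A_s\|f\|_s\le\||Tf|\|_s$, so $T$ is a topological isomorphism of $X_F$ onto its range $R(T)=\{\{g_i(f)\}_{i=1}^{\infty}:f\in X_F\}$, which is precisely the subspace named in the statement, and $\|T^{-1}y\|_s\le A_s^{-1}\||y|\|_s$ on $R(T)$. Because each $\Theta_s$ is a CB-space, the canonical vectors $e_i$ belong to $\Theta_F$ and every $y=\{c_i\}\in\Theta_F$ expands as $\sum_i c_i e_i$ in each $\||\cdot|\|_s$; the dual family will in every case be $f_i:=Se_i$ for a suitable synthesis map $S$.

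Assuming (1)--(3), I would simply exhibit the projection. Define $S:\Theta_F\to X_F$ by $S\{c_i\}=\sum_i c_i f_i$. The $\Theta_s^*$-frame bound in (3), $\||\{g(f_i)\}|\|_s^*\le D_s\|g\|_s^*$, together with the coordinatewise pairing $|\sum_i c_i g(f_i)|\le\||\{c_i\}|\|_s\,\||\{g(f_i)\}|\|_s^*$, shows $\|S\{c_i\}\|_s\le D_s\||\{c_i\}|\|_s$, so $S$ is $F$-bounded. The first formula in (1) reads $STf=\sum_i g_i(f)f_i=f$, i.e. $ST=I_{X_F}$; hence $U:=TS$ satisfies $U^2=T(ST)S=TS=U$ and $Uy=y$ for $y\in R(T)$, so $U$ is a continuous projection of $\Theta_F$ onto $R(T)$.

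For the converse I would put $S:=T^{-1}U:\Theta_F\to X_F$ and $f_i:=Se_i=T^{-1}Ue_i\in X_F$. For $f\in X_F$ one has $Tf\in R(T)$, so $U(Tf)=Tf$ and $STf=f$; expanding $Tf=\sum_i g_i(f)e_i$ and using continuity of $S$ yields $f=\sum_i g_i(f)f_i$, the first half of (1). For (2) I would extend across Banach levels: $S$ extends to $S_s:\Theta_s\to X_s$ and $T$ to $T_sf=\{g_i^s(f)\}$, the identity $S_sT_s=I$ holds on the dense set $X_F$ and hence on all of $X_s$, giving $f=\sum_i g_i^s(f)f_i$. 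The adjoint $S_s^*:X_s^*\to\Theta_s^*$ sends $g\mapsto\{g(f_i)\}$, which already provides both the membership $\{g(f_i)\}\in\Theta_s^*$ and the upper bound $\||\{g(f_i)\}|\|_s^*\le\|S_s\|\,\|g\|_s^*$ of (3); the lower bound follows from $g(f)=\langle\{g(f_i)\},T_sf\rangle$ and $\||T_sf|\|_s\le B_s\|f\|_s$, which force $\|g\|_s^*\le B_s\||\{g(f_i)\}|\|_s^*$. Finally, since $T_s^*e_i^*=g_i^s$, the dual reconstruction $g=\sum_i g(f_i)g_i^s$ is obtained by applying the continuous adjoint $T_s^*$ to the canonical expansion of $\{g(f_i)\}$ in $\Theta_s^*$, which specializes on $X_F$ to the second half of (1).

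The hard part will be the dual side of the converse: upgrading the merely pointwise identity $g=\sum_i g(f_i)g_i$ to genuine norm convergence in $X_s^*$ and securing the lower bound in (3). This is exactly where the standing hypothesis that every $\Theta_s^*$ is a CB-space is indispensable, since it guarantees that $\{g(f_i)\}$ expands over the dual canonical basis and that the $\Theta_s$--$\Theta_s^*$ pairing acts coordinatewise. I would also have to make sure that $T^{-1}U$ is not merely continuous but $s$-bounded, so that the extensions $S_s$ used in (2) and (3) actually exist; this reduces to the seminorm estimate $\|T^{-1}y\|_s\le A_s^{-1}\||y|\|_s$ together with boundedness of $U$.
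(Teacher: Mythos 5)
A preliminary remark: the paper contains no proof of this theorem --- it is quoted with the citation \cite{phst1,phst2} --- so your proposal can only be compared with the argument in those sources, and your overall architecture is indeed the standard one used there: the analysis operator $Tf=\{g_i(f)\}_{i=1}^{\infty}$, which by (\ref{ferc5}) is an isomorphism onto its range $R(T)$, the synthesis operator $S$ with $f_i=Se_i$, and the projection $U=TS$. Your forward direction is essentially complete and correct: the upper $\Theta_s^*$-frame bound in (3) plus the coordinatewise pairing (valid because $\Theta_s$ is a CB-space, so $\Theta_s^*$ is realized as a sequence space acting coordinatewise) makes the partial sums $\sum_{i\leq n}c_if_i$ Cauchy in every $\|\cdot\|_s$, so $S$ is $F$-bounded, $ST=I_{X_F}$ by (1), and $U=TS$ is a continuous (in fact $F$-bounded) projection of $\Theta_F$ onto $R(T)$.

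The converse, however, has a genuine gap, and it sits exactly at the point you wave away in your final sentence. Everything in your treatment of (2) and (3) --- the extensions $S_s:\Theta_s\to X_s$, the adjoints $S_s^*$, the identity $S_sT_s=I$ on $X_s$ --- requires $S=T^{-1}U$ to be $s$-bounded, i.e.\ $\|Sy\|_s\leq C_s\||y|\|_s$ for each fixed $s$. But continuity of $U$ on the Fr\'echet space $\Theta_F$ only yields, for each $s$, some level $m(s)\geq s$ and a constant $C_s$ with $\||Uy|\|_s\leq C_s\||y|\|_{m(s)}$; the paper itself emphasizes that $F$-boundedness is strictly stronger than continuity. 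So the estimate $\|T^{-1}y\|_s\leq A_s^{-1}\||y|\|_s$ ``together with boundedness of $U$'' gives only $\|Sy\|_s\leq A_s^{-1}C_s\||y|\|_{m(s)}$, which extends $S$ to a map $\Theta_{m(s)}\to X_s$, not $\Theta_s\to X_s$, and density of $\Theta_F$ in $\Theta_s$ does not help, since elements of $\Theta_s\setminus\Theta_{m(s)}$ cannot be approximated with control in the higher norm. Concretely, the partial-sum operators $f\mapsto\sum_{i\leq n}g_i(f)f_i$ are then uniformly bounded only from $(X_F,\|\cdot\|_{m(s)})$ into $(X_s,\|\cdot\|_s)$, so the density argument that upgrades the expansion from $X_F$ to all of $X_s$ breaks down. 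Note the asymmetry: the first half of (1) does survive mere continuity, since $f=T^{-1}UTf=\sum_i g_i(f)f_i$ follows from continuity of $T^{-1}U$ between the Fr\'echet spaces and the CB-expansion of $Tf$; but (2), (3), and the dual half of (1) --- which lives in $X_F^*=\cup_s X_s^*$ and is extracted from the level-wise statements --- genuinely require the level-wise operators. Bridging from a merely continuous projection to those level-wise operators is the actual content of the proof in \cite{phst1,phst2}, and it is the one idea missing from your proposal.
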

The following proposition shows that the pre-Fr\'echet Besselness
is equivalent to the F-boundedness of an operator.
\begin{prop}\label{TT}
Let $\{X_s,\|.\|_s\}_{s\in\mathbb{N}}$ be a family of Banach
spaces, satisfying (\ref{ferc1})-(\ref{ferc3}) and let
$\{\Theta_s,\||.|\|_s\}_{s\in\mathbb{N}}$ be a family of
CB-spaces, satisfying (\ref{ferc1})-(\ref{ferc3}) and we assume
that $\Theta_s^*$ is a CB-space for every $s\in\mathbb{N}$. The
family $\{g_i\}_{i=1}^{\infty}\subseteq X^*_F$ is a pre-Fr\'echet
Bessel sequences for $X_F$ with respect to $\Theta_F$ if and only
if the operator $T:\Theta^*_F\rightarrow X^*_F$ defined by $
T\{d_i\}_{i=1}^{\infty}=\sum_{i=1}^{\infty} d_i g_i$ is well
defined and $\| T\|_s\leq B_s $, for all $s\in \mathbb{N}$.
\end{prop}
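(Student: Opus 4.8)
The plan is to realize $T$ as the synthesis operator that is dual, level by level, to the analysis operator of $\{g_i\}_{i=1}^\infty$, so that the stated equivalence reduces to transporting a single norm estimate across this duality. Introduce the analysis operator $U:X_F\to\Theta_F$ by $Uf=\{g_i(f)\}_{i=1}^\infty$; the pre-Fr\'echet Bessel condition---namely (\ref{ferc4}) together with the upper inequality in (\ref{ferc5})---says exactly that $U$ maps $X_F$ into $\Theta_F$ and is $s$-bounded with $\|U\|_s\le B_s$ for every $s\in\mathbb N$. The key observation is that, on each level $s$, the restriction of $T$ to $\Theta_s^*$ is the Banach-space adjoint of $U:X_s\to\Theta_s$, since for every finitely supported $\{d_i\}\in\Theta_s^*$ and every $f\in X_F$ one has $\langle T\{d_i\},f\rangle=\sum_i d_i\,g_i(f)=\langle\{d_i\},Uf\rangle$. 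The CB-space hypotheses on both $\Theta_s$ and $\Theta_s^*$ serve to guarantee that finitely supported sequences are dense and norming, so that this pairing faithfully computes the norms involved.

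For the forward implication, assume $\{g_i\}$ is pre-Fr\'echet Bessel, fix $s$, and take a finitely supported $\{d_i\}\in\Theta_s^*$. For $f\in X_F$ the duality inequality together with (\ref{ferc5}) gives
\[
\Big|\sum_i d_i\,g_i(f)\Big|\le\||\{d_i\}|\|_s^*\,\||\{g_i(f)\}|\|_s\le B_s\,\||\{d_i\}|\|_s^*\,\|f\|_s .
\]
Since $X_F$ is dense in $X_s$, taking the supremum over $f\in X_F$ with $\|f\|_s\le1$ yields $\|\sum_i d_i\,g_i\|_s^*\le B_s\,\||\{d_i\}|\|_s^*$. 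Because $\Theta_s^*$ is a CB-space, finitely supported sequences are dense in it, so $T$ extends uniquely to a bounded operator on $\Theta_s^*$ with $\|T\|_s\le B_s$; applying $T$ to the partial sums $\sum_{i=1}^n d_i e_i$, which converge to $\{d_i\}$ by the basis property, shows in addition that the series $\sum_i d_i\,g_i$ converges in $X_s^*$ to $T\{d_i\}$, so $T$ is well defined. As $s$ is arbitrary, $T$ is $F$-bounded with the asserted bounds.

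For the converse, assume $T$ is well defined with $\|T\|_s\le B_s$, and fix $f\in X_F$ and $s$. For every finitely supported $\{d_i\}\in\Theta_s^*$,
\[
\Big|\sum_i d_i\,g_i(f)\Big|=|\langle T\{d_i\},f\rangle|\le\|T\{d_i\}\|_s^*\,\|f\|_s\le B_s\,\||\{d_i\}|\|_s^*\,\|f\|_s .
\]
Since $\Theta_s^*$ is a CB-space, its finitely supported sequences form a norming set for $\Theta_s$, so taking the supremum over $\||\{d_i\}|\|_s^*\le1$ recovers $\||\{g_i(f)\}|\|_s\le B_s\,\|f\|_s$, the upper inequality in (\ref{ferc5}). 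The membership (\ref{ferc4}), $\{g_i(f)\}\in\Theta_F$, must then be secured separately: one checks that the truncations $\sum_{i=1}^n g_i(f)\,e_i$ are norm-bounded in each $\Theta_s$ and converge there, their limit being identified coordinatewise with $\{g_i(f)\}$ through the BK-property.

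I expect the real difficulty to lie entirely in this last membership step. The forward estimate and the norm recovery are routine H\"older-type duality once the adjoint identity is in place, but producing $\{g_i(f)\}\in\Theta_s$ from the bare boundedness of $T$ means passing from an element of the bidual $\Theta_s^{**}$---which is all that the pairing bound directly exhibits, via its coordinates $g_i(f)$ read off against the canonical basis of $\Theta_s^*$---back into $\Theta_s$ itself. This is exactly where both CB-space assumptions are indispensable: the canonical basis of $\Theta_s^*$ determines the bidual element through its coordinates, while the canonical basis of $\Theta_s$ must be used to confirm that the resulting sequence genuinely belongs to $\Theta_s$ and that its norm coincides with the supremum computed above. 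Pinning down this identification, rather than the duality bookkeeping, is the crux of the argument.
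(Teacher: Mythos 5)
Your forward direction is essentially the paper's argument in different clothing: the paper defines the analysis operator $Rf=\{g_i(f)\}_{i=1}^{\infty}$, sets $T=R^*$, checks $R^*e_j=g_j$, and uses the CB-property of $\Theta_s^*$ to write $\{d_i\}=\sum_i d_i e_i$ and conclude $T\{d_i\}=\sum_i d_i g_i$ with $\|T\|_s\leq B_s$; your version via the pairing on finitely supported sequences, density, and extension is a correct and complete rendering of the same adjoint idea. No complaint there.

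The converse is where the proposal has a genuine gap, and you located it yourself: the membership $\{g_i(f)\}\in\Theta_F$. Your duality estimate correctly recovers the upper bound \emph{once membership is known}, but the patch you sketch --- that the truncations $\sum_{i=1}^{n}g_i(f)e_i$ are norm-bounded in $\Theta_s$ and hence converge there --- does not follow from the stated hypotheses. Boundedness of partial sums yields convergence only when the canonical basis of $\Theta_s$ is \emph{boundedly complete}, which a CB-space need not have. Concretely, take $\Theta_s=c_0$ (so $\Theta_s^*=\ell^1$ is a CB-space and all hypotheses of the proposition are met) and $g_i=g\neq 0$ fixed in $X^*_F$: then $T\{d_i\}=\bigl(\sum_i d_i\bigr)g$ is well defined and bounded on $\ell^1$, yet $\{g_i(f)\}=(g(f),g(f),\dots)$ has bounded, non-convergent truncations in $c_0$ and does not lie in $c_0$. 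What the pairing bound actually exhibits is an element $T^*f$ of $\Theta_s^{**}$ with coordinates $g_i(f)$; descending from $\Theta_s^{**}$ to $\Theta_s$ requires an extra hypothesis (reflexivity of $\Theta_s$, or bounded completeness of its canonical basis). You should know that the paper's own proof elides exactly the same point: it writes $\{g_i(f)\}_{i=1}^{\infty}=\{(T^*f)e_i\}_{i=1}^{\infty}$ and then asserts ``i.e.\ $\{g_i(f)\}_{i=1}^{\infty}\in\Theta_F$'', silently identifying a bidual element with a member of $\Theta_F$. So your diagnosis of where the crux lies is more candid than the paper's treatment, but the step as you propose to close it is false in general, and as stated the equivalence needs a further assumption on the $\Theta_s$ for the converse to hold.
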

\begin{proof}
First, suppose that $\{g_i\}_{i=1}^{\infty}\subseteq X^*_F$ is a
pre-Fr\'echet Bessel sequences for $X_F$ with respect to
$\Theta_F$. Define the operator $$R:X_F\rightarrow\Theta_F$$ by
$$Rf=\{g_i(f)\}_{i=1}^{\infty}.$$ Since $\{g_i(f)\}_{i=1}^{\infty}$
is a pre-Fr\'echet Bessel sequence , so  $\|R\|_s\leq B_s$ for
every $s\in\mathbb{N}$. The adjoint of $R$ is in the form
$R^*:\Theta^*_F\rightarrow X^*_F$ and
$$R^*(e_j)f=e_j(Rf)=e_j(\{g_i f\}_{i=1}^{\infty})=g_j f$$, so $R^*
e_j=g_j$. Put $T=R^*$, then $\|T\|_s\leq B_s$ and $$
T\{d_i\}_{i=1}^{\infty}=T(\sum_{i=1}^{\infty} d_i
e_i)=\sum_{i=1}^{\infty} d_i Te_i=\sum_{i=1}^{\infty} d_i R^*
e_i=\sum_{i=1}^{\infty} d_i g_i.$$
 Conversely, suppose the operator $T:\Theta^*_F\rightarrow X^*_F$ defined by $
T\{d_i\}_{i=1}^{\infty}=\sum_{i=1}^{\infty} d_i g_i$ is well
defined and $s$-bounded for all $ s\in\mathbb{N} $ . It is clear
that $Te_i=g_i$ and $T^*:X_F^{**}\rightarrow \Theta^{**}_F$ is
$(T^*f)e_i=f(Te_i)=f(g_i)$. Therefore $\{g_i
(f)\}_{i=1}^{\infty}=\{(T^*f)e_i\}_{i=1}^{\infty}$ , i.e. $\{g_i
(f)\}_{i=1}^{\infty}\in\Theta_F$. The $s$- boundedness of $T$
imply that $\||\{g_i (f)\}_{i=1}^{\infty}|\|_s\leq B_s$.
\end{proof}

Similar to Theorem \ref{2.5}, the following Theorem shows that for
any Fr\'echet space $X_F$ we can construct a Fr\'echet frame.
\begin{thm}
Let $\{X_s,\|.\|_s\}_{s\in\mathbb{N}}$ be a family of separable
Banach spaces satisfying (\ref{ferc1})-(\ref{ferc3}). Let $X_F:=
\bigcap_{s\in\mathbb{N}} X_s$. Then $X_F$ can be equipped with a
Fr\'echet frame with respect to an appropriately sequence space.
\end{thm}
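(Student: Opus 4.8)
The plan is to transport the Fr\'echet structure of $X_F$ onto a space of scalar sequences by means of a single countable family of functionals, in direct analogy with the construction behind Theorem \ref{2.5}. First I would exploit separability: since $X_1$ is a separable Banach space, the weak-$*$ topology on the closed unit ball of $X_1^*$ is compact and metrizable, hence separable, so I can choose a countable family $\{g_i\}_{i=1}^{\infty}$ that is weak-$*$ dense in that unit ball. Such a family is norming for $\|\cdot\|_1$ and in particular separates the points of $X_1$, hence of every $X_s\subseteq X_1$ and of $X_F$. Restricting each $g_i$ to $X_F$ yields $\{g_i\}\subseteq X_F^*$, the candidate analysis family.

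The step where the difficulty really sits is the choice of the \emph{ambient} dual. It is tempting to apply Theorem \ref{2.5} to each $X_s$ separately and then amalgamate the resulting functionals, but the functionals produced that way lie in $X_s^*$ and need not be $\|\cdot\|_t$-continuous for $t<s$. Indeed, the upper inequality in (\ref{ferc5}), namely $\||\{g_i(f)\}|\|_t\leq B_t\|f\|_t$, together with the continuity of the coordinate functionals in a BK-space, forces a bound $|g_i(f)|\leq C_i\|f\|_t$ for every $t$; thus each analysis functional must belong to $\bigcap_t X_t^{*}=X_1^{*}$. This is precisely why I take the entire family inside the smallest dual $X_1^{*}$ from the outset, and it is the main obstacle to any naive level-by-level amalgamation.

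With $\{g_i\}\subseteq X_1^{*}$ fixed, I define the analysis operator $R\colon X_F\to\mathbb{C}^{\mathbb{N}}$ by $Rf=\{g_i(f)\}_{i=1}^{\infty}$, which is injective because $\{g_i\}$ separates points. On the range $R(X_F)$ I transport the norms by setting $\||Rf|\|_s:=\|f\|_s$; this is well defined by injectivity, and since each $g_i\in X_1^{*}\subseteq X_s^{*}$ we have $|(Rf)_i|=|g_i(f)|\leq\|g_i\|_s\,\|f\|_s$, so the coordinate maps are $\||\cdot|\|_s$-continuous on $R(X_F)$. I then let $\Theta_s$ be the completion of $(R(X_F),\||\cdot|\|_s)$. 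Because $X_F$ is dense in $X_s$ by (\ref{ferc3}), this completion is isometrically isomorphic to $X_s$, the coordinate functionals extend continuously and still separate points, and $\Theta_s$ is therefore a genuine BK-space of scalar sequences.

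Finally I verify the Fr\'echet axioms for $\{\Theta_s\}$: the monotonicity (\ref{ferc2}) transports to $\||\cdot|\|_s\leq\||\cdot|\|_{s+1}$, giving the inclusions $\Theta_{s+1}\subseteq\Theta_s$ required by (\ref{ferc1}); density of $X_F$ in each $X_s$ gives density of $\Theta_F:=\bigcap_s\Theta_s$ in each $\Theta_s$, yielding (\ref{ferc3}); and a point-separation argument identifies $\Theta_F$ with $R(X_F)\cong X_F$. By construction $\||\{g_i(f)\}|\|_s=\|f\|_s$, so $\{g_i\}$ is a tight pre-F-frame with $A_s=B_s=1$, and the inverse $S:=R^{-1}\colon\Theta_F\to X_F$ satisfies $S(\{g_i(f)\})=f$ with $\|Sc\|_s=\||c|\|_s$, hence is $F$-bounded; thus $(\{g_i\},S)$ is a Fr\'echet frame for $X_F$ with respect to $\Theta_F$. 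The only genuinely delicate points are the $X_1^{*}$-membership forced by the upper bound, already isolated above, and checking that the abstract completions assemble into a nested family of honest sequence spaces, which is handled by the point-separation of $\{g_i\}$ together with the density hypothesis (\ref{ferc3}).
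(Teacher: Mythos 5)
Your proof is correct, and it takes a genuinely different (and more complete) route than the paper's. The paper runs the classical construction behind Theorem \ref{2.5} level by level: for each $s$ it chooses a dense sequence in $X_s$, Hahn--Banach norming functionals $g_i^s\in X_s^*$ with $\|f\|_s=\sup_i|g_i^s(f)|$, sets $\Theta_s=\{\{g_i^s(f)\}: f\in X_s\}\subseteq\ell^\infty$ with the sup norm, and then simply asserts that these level-dependent data satisfy (\ref{ferc1})--(\ref{ferc3}) and form an F-frame. That assertion is exactly where the paper is thin: since the family $\{g_i^s\}$ changes with $s$, there is no single sequence in $X_F^*$ as the definition of an F-frame requires, and the inclusions $\Theta_{s+1}\subseteq\Theta_s$ do not follow, because a sequence of the form $\{g_i^{s+1}(f)\}$ need not be of the form $\{g_i^s(h)\}$. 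You isolate precisely this obstruction --- functionals produced at level $s$ live in $X_s^*$ and need not be continuous for the weaker norms --- and you justify the fix by observing that the upper bound in (\ref{ferc5}) plus BK-continuity of coordinates forces every analysis functional into the dual of the largest space; you then choose one countable separating family there (via weak-$*$ metrizability and compactness of the dual ball) and transport the norms, $\||Rf|\|_s:=\|f\|_s$, onto the range of the analysis operator, realizing the completions concretely as sequence spaces by point separation. This sacrifices the concrete sup-norm description and any per-level norming property (your frame inequalities hold with $A_s=B_s=1$ essentially by definition of the transported norm, which the definition of a pre-F-frame permits), but it buys a single family $\{g_i\}\subseteq X_F^*$, verifiable nesting and density for $\{\Theta_s\}$, the identification $\Theta_F=R(X_F)$, and an isometric $F$-bounded synthesis $S=R^{-1}$, i.e.\ a complete argument where the paper offers a sketch. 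Two cosmetic points: in the paper's indexing the largest space is $X_0$ rather than $X_1$, and in your necessity observation only the weakest-norm case is really needed, the others following from the monotonicity (\ref{ferc2}).
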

\begin{proof}
Since $X_s$ is a separable Banach space for any $s\in\mathbb{N}$,
there is a sequence $\{x_i^s\}_{i=1}^{\infty} \subseteq X_s$, such
that $\overline{\{x_i^s\}_{i=1}^{\infty}}= X_s$. For any $f_s\in
X_s$ there is a subsequence $x^s_{k_i}\rightarrow f_s$ as
$i\rightarrow\infty$. By Hahn-Banach Theorem, there is $g_i^s\in
X^*_s$ such that $g_i^s(x_i^s)=\|x_i^s\|$ and $\|g_i^s\|=1$. Now,
$$\|x_{k_i}^s\|=|g_{k_i}^s(x_{k_i})|\leq\|g_{k_i}^s(f^s)\|+\|f^s-x_{k_i}^s\|$$,
so $\|f^s\|\leq\sup\|g_i(f^s)\|.$ Since we also have
$\|f^s\|\geq\sup_i\|g_i(f^s)\|,$ therefore
$$\|f^s\|=\sup_{i}\|g_i^s(f^s)\|,\quad\forall f^s\in X_s.
$$
Let $\Theta_s\subseteq\ell^\infty$ and
$\Theta_s=\big\{\{g_i(f^s)\}: f^s\in X_s\big\}$, then
$\{\Theta_s\}_{s\in \mathbb{N}  }\Theta_s$ satisfies
(\ref{ferc1})-(\ref{ferc3}). Let $\Theta_F:=\bigcap_{s\in
\mathbb{N}}$ and $S(\{g_i(f)\})=f$. Then $(\{g_i^s\}, S)$ is a
Fr\'echet frame for $X_F$ with respect $\Theta_F$.
\end{proof}

\section{Perturbation of Fr\'echet Frames}
Like the results about $p$-frames \cite{stoeva09}, Banach frames
and atomic decompositions \cite{ole,jkv}, generalized frames
\cite{NaRa1}, continuous frames \cite{ranade}, we study
perturbations of Fr\'echet frames. We need the following
assertion.
\begin{lem}\label{1.1}
\cite{kato} Let $U:X\rightarrow X$ be a linear operator and assume
that there exist constants $\lambda_1,\lambda_2\in[0,1[$ such
that$$ \|x-Ux\|\leq\lambda_1\|x\|+\lambda_2\|Ux\|,\quad\forall
x\in X.
$$ Then $U$ is invertible and $$
\frac{1-\lambda_1}{1+\lambda_2}\|x\|\leq\|Ux\|\leq\frac{1+\lambda_1}{1-\lambda_2}\|x\|
$$$$
\frac{1-\lambda_2}{1+\lambda_1}\|x\|\leq\|U^{-1}x\|\leq\frac{1+\lambda_2}{1-\lambda_1}\|x\|
$$
for all $x\in X$.
\end{lem}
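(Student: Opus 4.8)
The plan is to extract everything from the pointwise hypothesis by first converting it into two-sided norm estimates and then treating surjectivity as a separate issue. First I would apply the two forms of the reverse triangle inequality to $\|x-Ux\|$. From $\|x\|-\|Ux\|\le\|x-Ux\|\le\lambda_1\|x\|+\lambda_2\|Ux\|$ one rearranges to $\frac{1-\lambda_1}{1+\lambda_2}\|x\|\le\|Ux\|$, and from $\|Ux\|-\|x\|\le\|x-Ux\|$ one gets $\|Ux\|\le\frac{1+\lambda_1}{1-\lambda_2}\|x\|$. Since $\lambda_1<1$ the lower constant is strictly positive, so $U$ is automatically bounded (upper estimate), injective, and bounded below; in particular $U$ has closed range. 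The four displayed inequalities of the statement then follow formally: the two for $U$ are exactly what was just derived, and the two for $U^{-1}$ come by substituting $x=U^{-1}y$ once invertibility is known.

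Thus the whole content reduces to showing that $U$ is onto, and this is the main obstacle. The tempting shortcut of bounding $\|I-U\|$ in operator norm and summing a Neumann series does \emph{not} work in general: the hypothesis only yields $\|I-U\|\le\frac{\lambda_1+\lambda_2}{1-\lambda_2}$, which can exceed $1$ (for instance $\lambda_1=\lambda_2=\frac{2}{3}$), so $I-U$ need not be a contraction. This is precisely why the pointwise inequality, and not merely its operator-norm consequence, has to be used.

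To obtain surjectivity I would run a method-of-continuity argument along the segment $U_t:=(1-t)I+tU$, $t\in[0,1]$, which joins the invertible operator $U_0=I$ to $U_1=U$. The crucial ingredient is a lower bound for $U_t$ that is uniform in $t$. I would estimate $\|U_tx\|$ from the two endpoints of the segment: since $U_tx=x-t(x-Ux)$ we have $\|U_tx\|\ge\|x\|-t\|x-Ux\|$, and since $U_tx-Ux=(1-t)(x-Ux)$ we have $\|U_tx\|\ge\|Ux\|-(1-t)\|x-Ux\|$. As $\|U_tx\|$ dominates both right-hand sides it dominates their arithmetic mean, in which the $t$-dependent terms cancel, giving $\|U_tx\|\ge\frac{\|x\|+\|Ux\|-\|x-Ux\|}{2}$ for every $t\in[0,1]$. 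Feeding in $\|x-Ux\|\le\lambda_1\|x\|+\lambda_2\|Ux\|$ yields $\|U_tx\|\ge\frac{(1-\lambda_1)\|x\|+(1-\lambda_2)\|Ux\|}{2}\ge\frac{1-\lambda_1}{2}\|x\|$, a positive bound independent of $t$.

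With this uniform estimate every $U_t$ is injective with closed range, so for $U_t$ the properties of being surjective and of being invertible coincide. I would then finish by the standard connectedness argument: the set $G=\{t\in[0,1]:U_t\text{ is invertible}\}$ is nonempty (it contains $0$), open, and closed. Both openness and closedness follow from a single Neumann-perturbation estimate, using $\|U_t-U_{t'}\|=|t-t'|\,\|I-U\|$ together with the uniform bound $\|U_t^{-1}\|\le\frac{2}{1-\lambda_1}$ valid at each $t\in G$: an operator within distance $1/\|U_{t_0}^{-1}\|$ of the invertible $U_{t_0}$ is again invertible. Connectedness of $[0,1]$ forces $G=[0,1]$, so $U=U_1$ is onto. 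Being injective and surjective with closed range, $U$ is boundedly invertible, and substituting $x=U^{-1}y$ into the two estimates for $U$ produces the stated bounds for $U^{-1}$.
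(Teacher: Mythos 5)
Your proposal is correct. Note first that the paper itself offers no proof of this lemma: it is quoted from the literature (the citation is to Kato; the statement in this exact two-constant form is the perturbation lemma used by Casazza and Christensen), so there is no in-paper argument to compare against line by line. Your reconstruction is a complete and valid proof. The two norm estimates for $U$ via the reverse triangle inequality are exactly right, and you correctly flag the key trap: the hypothesis only gives $\|I-U\|\leq\frac{\lambda_1+\lambda_2}{1-\lambda_2}$, which can exceed $1$, so a one-shot Neumann series is unavailable and the pointwise inequality must be exploited more carefully. Your method of continuity along $U_t=(1-t)I+tU$ does this cleanly, and the averaging trick --- bounding $\|U_tx\|$ below by both $\|x\|-t\|x-Ux\|$ and $\|Ux\|-(1-t)\|x-Ux\|$ and taking the mean so the $t$-dependence cancels --- yields the uniform lower bound $\|U_tx\|\geq\frac{1-\lambda_1}{2}\|x\|$, which is precisely what makes the clopen argument on $G=\{t:U_t \text{ invertible}\}$ close up (the uniform bound $\|U_t^{-1}\|\leq\frac{2}{1-\lambda_1}$ gives a perturbation radius independent of $t$). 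The final bounds for $U^{-1}$ by substituting $x=U^{-1}y$ are immediate. Two small points worth making explicit: completeness of $X$ is used twice (for closedness of the range of an operator bounded below, and for convergence of the Neumann series in the perturbation step), which is harmless here since the paper assumes throughout that $X$ is a Banach space; and the degenerate case $\|I-U\|=0$, i.e. $U=I$, should be set aside trivially before dividing by $\|I-U\|$ to get the uniform radius.
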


The simplest  assertion about perturbation of F-Bessel sequences
is:

\begin{prop}
Let $X_F$ be a Fr\'echet space satisfying (1)-(3) and let
$\Theta_F$ be a Fr\'echet sequence space satisfying (1)-(3) so
that $\Theta_s$ is a reflexive $CB$-space for every $s$. Let
$\{g_i\}$ be an Fr\'echet Bessel sequence for $X_F$ with respect
to $\Theta_F$ with bounds $B_s$ and let $\{f_i\}\subset X_F^*$.

Assume that $\{(g_i - f_i )(f)\} \in \Theta_F$, $\forall f\in
X_F$, and $\exists \, \widetilde{\mu}_s \geq 0$ such that
\begin{equation}\label{b2b}
\|| \{(g_i - f_i )(f)\}| \|_{s} \le \widetilde{\mu}_s \| f\|_{s},
\ \forall f\in X_F
\end{equation}
(i.e.  $\{g_i-f_i\}$ is an Fr\'echet Bessel sequence for $X_F$
w.r.t. $\Theta_F$). Then $\{f_i\}$ is an $F$-Bessel sequence for
$X_F$ w.r.t. $\Theta_F$ with bounds $B_s+\widetilde{\mu}_s$.

The converse also holds.
\end{prop}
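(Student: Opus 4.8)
The plan is to exploit the fact that $\Theta_F$ is a linear space and that each $\||\cdot|\|_s$ is a genuine norm, so that F-Bessel sequences behave additively; no inversion argument (and hence no appeal to Lemma \ref{1.1}) is needed, because only upper bounds are involved. Everything hinges on the coordinatewise identity $f_i(f) = g_i(f) - (g_i - f_i)(f)$, valid for every $i$ and every $f \in X_F$, which at the level of sequences reads $\{f_i(f)\}_{i=1}^\infty = \{g_i(f)\}_{i=1}^\infty - \{(g_i - f_i)(f)\}_{i=1}^\infty$.

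First I would verify the membership condition (\ref{ferc4}) for $\{f_i\}$. Since $\{g_i\}$ is F-Bessel we have $\{g_i(f)\}_{i=1}^\infty \in \Theta_F$, and by hypothesis $\{(g_i - f_i)(f)\}_{i=1}^\infty \in \Theta_F$; as $\Theta_F = \bigcap_s \Theta_s$ is a vector space, their difference $\{f_i(f)\}_{i=1}^\infty$ again lies in $\Theta_F$. Next, fixing $s \in \mathbb{N}$ and applying the triangle inequality for $\||\cdot|\|_s$ to the displayed identity gives $\||\{f_i(f)\}|\|_s \le \||\{g_i(f)\}|\|_s + \||\{(g_i - f_i)(f)\}|\|_s$. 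Bounding the first term by the F-Besselness of $\{g_i\}$ and the second by (\ref{b2b}) yields $\||\{f_i(f)\}|\|_s \le (B_s + \widetilde{\mu}_s)\|f\|_s$ for all $f \in X_F$, which is exactly the upper inequality in (\ref{ferc5}) with bound $B_s + \widetilde{\mu}_s$. Hence $\{f_i\}$ is an F-Bessel sequence with the asserted bounds.

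For the converse I would assume instead that both $\{g_i\}$ and $\{f_i\}$ are F-Bessel and show that $\{g_i - f_i\}$ is F-Bessel, i.e. that (\ref{b2b}) holds. This is completely symmetric: from $(g_i - f_i)(f) = g_i(f) - f_i(f)$ the same two steps --- closure of $\Theta_F$ under subtraction and the triangle inequality for each $\||\cdot|\|_s$ --- give $\{(g_i - f_i)(f)\} \in \Theta_F$ together with $\||\{(g_i - f_i)(f)\}|\|_s \le (B_s + B_s')\|f\|_s$, where $B_s'$ are upper bounds of $\{f_i\}$. Thus $\widetilde{\mu}_s := B_s + B_s'$ works, and (\ref{b2b}) is recovered.

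I do not expect a genuine obstacle here: the whole argument is the linearity of $\Theta_F$ plus the triangle inequality, and it never touches the lower bounds $A_s$. The only points deserving a word of care are that $\Theta_F$ really is closed under subtraction (immediate, since it is a Fr\'echet, hence linear, space) and that each $\||\cdot|\|_s$ is an honest norm so the triangle inequality applies. It is worth noting that the reflexivity and CB-space hypotheses on the $\Theta_s$ are not actually used in this direct computation; they would only enter if one instead routed the proof through Proposition \ref{TT}, characterizing $\{f_i\}$ as F-Bessel via $s$-boundedness of the synthesis operator $T_f\{d_i\} = \sum_i d_i f_i$ and using $T_f = T_g - T_{g-f}$ with $\|T_f\|_s \le \|T_g\|_s + \|T_{g-f}\|_s \le B_s + \widetilde{\mu}_s$.
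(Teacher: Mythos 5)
Your proof is correct and takes essentially the same route as the paper's: the decomposition $\{f_i(f)\} = \{g_i(f)\} - \{(g_i - f_i)(f)\}$, linearity of $\Theta_F$, and the triangle inequality for each $\||\cdot|\|_s$, giving the bound $B_s + \widetilde{\mu}_s$. In fact your write-up is cleaner than the paper's (whose displayed identity and inequality contain typographical slips, repeating the term $\{(g_i-f_i)(f)\}$ where $\{g_i(f)\}$ is intended), and you additionally spell out the converse, which the paper asserts without proof; your observation that the reflexivity and CB-space hypotheses on $\Theta_s$ are never used is also accurate.
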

\begin{proof}
It is clear that $\{(f_i )(f)\}= \{(-g_i+ f_i )(f)\}+ \{(g_i - f_i
)(f)\}\in\Theta_F $ for all $f\in X_F$, also
$$
|\|\{f_i(f)\}|\|_s\leq|\|\{(g_i - f_i )(f)\}|\|_s+|\|\{(g_i - f_i
)(f)\}|\|_s\leq (B_s+\widetilde{\mu}_s)\|f\|
$$
for all $s\in\mathbb{N}, f\in X_F $.
\end{proof}

The following Theorem generalizes a Theorem of \cite{jkv} to
Fr\'echet frames and gives a necessary and sufficient condition
for the stability of Fr\'echet frames.
\begin{thm}
Let $\big(\{g_i\}, S\big)$ be a Fr\'echet frame for $X_F$ with
respect to $\Theta_F$ with bounds $A_s$ and $B_s$. Let
$\{h_i\}\subseteq X^*_F$ such that $\{h_i(f)\}\in\Theta_F$ for all
$f\in X_F$ and let $D:\Theta_F\rightarrow\Theta_F$ be a continuous
linear operator such that $D\{h_n(f)\}=\{g_n(f)\}$, $f\in X_F$.
Then there exists an operator $V:\Theta_F\rightarrow X_F$ such
that $(\{h_n\}, V)$ is a Fr\'echet frame if and only if for each
$s\in\mathbb{N}$ there exists $\lambda_{s}>0$ such that
\begin{equation}\label{612}
\||\{(g_n -h_n)(f)\}|\|_{s}\leq \lambda_s
min\{\||\{g_n(f)\}|\|_{s},\|| \{(h_n(f))\}|\|_{s}\}
\end{equation} for all $f\in X_F.$
\end{thm}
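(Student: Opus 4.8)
The plan is to treat the two implications separately; both rest on the defining two-sided estimate (\ref{ferc5}) for the systems $\{g_n\}$ and $\{h_n\}$ together with the triangle inequality, and only the construction of the reconstruction operator uses the given operator $D$.

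For necessity, suppose $(\{h_n\},V)$ is a Fr\'echet frame with bounds $A_s'$ and $B_s'$. Fix $s$ and $f\in X_F$. The triangle inequality together with the two upper bounds gives
$$\||\{(g_n-h_n)(f)\}|\|_s\leq \||\{g_n(f)\}|\|_s+\||\{h_n(f)\}|\|_s\leq (B_s+B_s')\|f\|_s,$$
while the two lower bounds give $\min\{\||\{g_n(f)\}|\|_s,\||\{h_n(f)\}|\|_s\}\geq \min\{A_s,A_s'\}\|f\|_s$. Dividing, (\ref{612}) holds with $\lambda_s:=(B_s+B_s')/\min\{A_s,A_s'\}$.

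For sufficiency, assume (\ref{612}). First I would upgrade $\{h_n\}$ to a pre-F-frame. Using the $\||\{g_n(f)\}|\|_s$-branch of the minimum, $\||\{h_n(f)\}|\|_s\leq \||\{g_n(f)\}|\|_s+\||\{(g_n-h_n)(f)\}|\|_s\leq(1+\lambda_s)\||\{g_n(f)\}|\|_s\leq(1+\lambda_s)B_s\|f\|_s$, which is the upper bound. Using the $\||\{h_n(f)\}|\|_s$-branch, $\||\{g_n(f)\}|\|_s\leq(1+\lambda_s)\||\{h_n(f)\}|\|_s$, and combined with $A_s\|f\|_s\leq\||\{g_n(f)\}|\|_s$ this yields the lower bound $\||\{h_n(f)\}|\|_s\geq \frac{A_s}{1+\lambda_s}\|f\|_s$. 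Since $\{h_n(f)\}\in\Theta_F$ by hypothesis, $\{h_n\}$ is a pre-F-frame with bounds $\frac{A_s}{1+\lambda_s}$ and $(1+\lambda_s)B_s$.

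It remains to produce the F-bounded reconstruction operator, and here is where $D$ enters. The natural candidate is $V:=S\circ D$: since $D\{h_n(f)\}=\{g_n(f)\}$ and $S\{g_n(f)\}=f$, one gets $V\{h_n(f)\}=S(D\{h_n(f)\})=S\{g_n(f)\}=f$ for every $f\in X_F$, which is exactly the reconstruction identity required of a Fr\'echet frame operator. The main obstacle is to verify that $V$ is F-bounded rather than merely continuous. Because $S$ is F-bounded, $\|Vc\|_s=\|SDc\|_s\leq K_s\||Dc|\|_s$, so it suffices to control $\||Dc|\|_s$ by a constant multiple of $\||c|\|_s$ for each fixed $s$. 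On the coefficient subspace $\{\{h_n(f)\}:f\in X_F\}$ this is automatic: the pre-F-frame bounds just obtained give $\||D\{h_n(f)\}|\|_s=\||\{g_n(f)\}|\|_s\leq \frac{(1+\lambda_s)B_s}{A_s}\||\{h_n(f)\}|\|_s$. The delicate step, which I expect to be the crux of the whole argument, is to convert this subspace estimate into genuine $s$-boundedness of $V$ on all of $\Theta_F$; this forces one to exploit the structure of $D$ (its continuity, and in effect its $s$-boundedness on $\Theta_F$) carefully, and it is the point I would examine most closely before declaring $(\{h_n\},V)$ a Fr\'echet frame.
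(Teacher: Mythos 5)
Your proposal is correct and follows essentially the same route as the paper: the sufficiency bounds $\frac{A_s}{1+\lambda_s}\|f\|_s\leq\||\{h_n(f)\}|\|_s\leq(1+\lambda_s)B_s\|f\|_s$ come from the same triangle-inequality manipulations, the reconstruction operator is the same $V=SD$, and your necessity argument differs only in the constant ($\lambda_s=(B_s+B_s')/\min\{A_s,A_s'\}$ versus the paper's $\lambda_s=\max\{1+B_s'/A_s,\,1+B_s/A_s'\}$, which it gets by estimating $\||\{(g_n-h_n)(f)\}|\|_s$ against each coefficient sequence separately). The one ``delicate step'' you rightly flag---that $D$ is only assumed continuous, so $V=SD$ need not be $F$-bounded as the paper's definition of a Fr\'echet frame requires---is not resolved in the paper either: its proof simply sets $V=SD$, checks $V\{h_n(f)\}=f$, and declares $(\{h_n\},V)$ a Fr\'echet frame, so on this point your proposal is, if anything, more careful than the published argument.
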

\begin{proof}
Suppose there exists $\lambda_s$ for every $s\in\mathbb{N}$ such
that (\ref{612}) holds. By assumption, $\{h_i(f)\}\in\Theta_F$ for
all $f\in X_F$. For any $f\in X_F$ and $s\in\mathbb{N}$,
\begin{eqnarray*}
A_s\|f\|_{s}&\leq&\||\{g_n(f)\}|\|_s\\&\leq&
\||(g_n-h_n)(f)\}|\|_s+\||\{h_n(f)\} |\|_s\\&\leq&
\lambda_s\||\{h_n(f)\}|\|_s+\||\{h_n(f)\}|\|_s
\\&=&(1+\lambda_s)\||\{h_n(f)\}|\|_s\\&\leq&
(1+\lambda_s)\big(\||\{(g_n-h_n)(f)\}|\|_s+\||\{g_n(f)\}|\|_s\big)\\&\leq&
(1+\lambda_s)^2\||\{g_n(f)\}|\|_s\\&\leq& (1+\lambda_s)^2
B_s\|f\|_s.
\end{eqnarray*}
So we have
$$
\frac{A_s}{1+\lambda_s}\|f\|_s\leq\||\{h_n(f)\}|\|_s\leq(1+\lambda_s)B_s\|f\|_s.
$$
Let $V=SD$. Then $V(\{h_n(f)\})=SD(\{h_n(f)\})=f$, i.e.
$\big(\{h_n\}, V\big)$ is a Fr\'echet frame for $X_F$ with respect
to $\Theta_F$.\\
Conversely, suppose $\big(\{g_i\}, S\big)$ and $\big(\{h_n\},
V\big)$ are  Fr\'echet frames for $X_F$ with respect to $\Theta_F$
 with bounds $A_s, B_s$ and $A'_s,B'_s$ , respectively. Then, by using the inequalities, we get
$$
\||\{(g_n-h_n)(f)\}|\|_{s}\leq\big(1+\frac{B'_s}{A_s}\big)\||\{g_n(f)\}|\|_{s},
\quad f\in X_F
$$
and
$$
\||\{(g_n-h_n)(f)\}|\|_{s}\leq\big(1+\frac{B_s}{A'_s}\big)\||\{h_n(f)\}|\|_{s},
\quad f\in X_F.
$$
Choose $\lambda_s:=Max\{1+\frac{B'_s}{A_s},1+\frac{B_s}{A'_s}\}$,
therefore (\ref{612}) holds for any $f\in X_F$.
\end{proof}

\begin{thm}
Let $\big(\{g_i\}, S\big)$ be a Fr\'echet frames for $X_s$ with
respect to $\Theta_s$. Let $\{h_i\}\in X^*_F$ such that
$\{h_i(f)\}\in\Theta_F$ for all $f\in X_F$ and let
$D:\Theta_F\rightarrow\Theta_F$ be a continuous ( or F-bounded)
linear operator such that $D\{g_n(f)\}=\{h_n(f)\}$, $f\in X_F$.
Let $\{\alpha_n\}$ and $\{\beta_n\}$ be sequences of positive
numbers for which $0<\inf \alpha_n\leq \sup\alpha_n<\infty$ and
$0<\inf \beta_n\leq \sup\beta_n<\infty$. If for any
$s\in\mathbb{N}$ there exist non negative scalers
$\lambda_s,\mu_s\in[0,1[$ and $\gamma_s$ such that
\begin{enumerate}
 \item  $\|S\|_s\gamma_s<(1-\lambda_s) (\inf\alpha_n)$
 \item  $\||\{(\alpha_n g_n-\beta_n h_n)f\}|\|_s\leq\lambda_s\||\{(\alpha_n
 g_n)f\}|\|_s+\mu_s\||\{(\beta_n h_n)f\}|\|_s+\gamma_s\|f\|_s, \forall
 f\in X_F,$
 \end{enumerate}
then there is a F-bounded operator $V:\Theta_F\rightarrow X_F$
such that $(\{h_i\}, V)$ is a Fr\'echet frame for $X_F$ with
respect to $\Theta_F$.
\end{thm}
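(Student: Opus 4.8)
The plan is to transport the weighted perturbation estimate in hypothesis (2) to the coefficient space $\Theta_F$ and there apply the Kato-type Lemma \ref{1.1}. I would introduce the diagonal multiplier operators $M_\alpha,M_\beta:\Theta_F\to\Theta_F$ given by $M_\alpha\{c_i\}=\{\alpha_i c_i\}$ and $M_\beta\{c_i\}=\{\beta_i c_i\}$. Because $0<\inf\alpha_n\le\sup\alpha_n<\infty$ and $0<\inf\beta_n\le\sup\beta_n<\infty$, both are $F$-bounded and $M_\alpha$ is invertible with $\|M_\alpha^{-1}\|_s\le(\inf\alpha_n)^{-1}$. I then set $L:=M_\beta D M_\alpha^{-1}$, which is $F$-bounded since $D$ is. The reason for this choice is that, writing $c:=\{\alpha_n g_n(f)\}=M_\alpha\{g_n(f)\}$, one gets $Lc=M_\beta D\{g_n(f)\}=M_\beta\{h_n(f)\}=\{\beta_n h_n(f)\}$; thus the two sequences compared in (2) are exactly $c$ and $Lc$.

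Next I would turn (2) into a genuine Kato inequality. Since $S\{g_n(f)\}=f$ we have $f=SM_\alpha^{-1}c$, whence $\|f\|_s\le(\|S\|_s/\inf\alpha_n)\,\||c|\|_s$. Substituting this into (2) gives, for every $f\in X_F$,
\[
\||c-Lc|\|_s\le\Big(\lambda_s+\tfrac{\|S\|_s\gamma_s}{\inf\alpha_n}\Big)\||c|\|_s+\mu_s\||Lc|\|_s .
\]
By hypothesis (1) the coefficient $\lambda_s+\|S\|_s\gamma_s/\inf\alpha_n$ is strictly less than $1$, and $\mu_s<1$; this is precisely the situation of Lemma \ref{1.1} with $\lambda_1=\lambda_s+\|S\|_s\gamma_s/\inf\alpha_n$ and $\lambda_2=\mu_s$. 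The elementary two-sided consequence $\tfrac{1-\lambda_1}{1+\lambda_2}\||c|\|_s\le\||Lc|\|_s\le\tfrac{1+\lambda_1}{1-\lambda_2}\||c|\|_s$ (which follows from the displayed inequality alone), combined with $(\inf\alpha_n)\||\{g_n(f)\}|\|_s\le\||c|\|_s\le(\sup\alpha_n)\||\{g_n(f)\}|\|_s$, with $(\inf\beta_n)\||\{h_n(f)\}|\|_s\le\||Lc|\|_s\le(\sup\beta_n)\||\{h_n(f)\}|\|_s$, and with the $g$-frame bounds $A_s\|f\|_s\le\||\{g_n(f)\}|\|_s\le B_s\|f\|_s$, yields constants $0<A'_s\le B'_s<\infty$ with $A'_s\|f\|_s\le\||\{h_n(f)\}|\|_s\le B'_s\|f\|_s$. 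Hence $\{h_n\}$ is a pre-$F$-frame.

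Finally I would construct the reconstruction operator. Applying Lemma \ref{1.1} in full, $L$ is invertible with $F$-bounded inverse, so $D=M_\beta^{-1}LM_\alpha$ is invertible with $D^{-1}=M_\alpha^{-1}L^{-1}M_\beta$ again $F$-bounded. Put $V:=SD^{-1}:\Theta_F\to X_F$. Since $\{h_n(f)\}=D\{g_n(f)\}$, we get $V\{h_n(f)\}=SD^{-1}D\{g_n(f)\}=S\{g_n(f)\}=f$ for all $f\in X_F$, and $V$ is $F$-bounded as a composition of $F$-bounded operators. Together with the bounds above, $(\{h_n\},V)$ is a Fr\'echet frame for $X_F$ with respect to $\Theta_F$.

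The main obstacle is the mismatch between what is assumed and what Lemma \ref{1.1} requires: inequality (2) holds only for $f\in X_F$, i.e. only for $c$ in the subspace $W:=\{\{\alpha_n g_n(f)\}:f\in X_F\}$, whereas Lemma \ref{1.1} is stated for an operator on the whole space. The one-sided estimates feeding the frame \emph{bounds} survive this restriction, since they follow directly from the displayed inequality on $W$, so the pre-frame inequalities for $\{h_n\}$ are safe. What is delicate is the \emph{invertibility} needed to define $V$ on all of $\Theta_F$: one must argue either that the estimate propagates from $W$ to $\Theta_F$, or that $V$ may first be defined on the range of $f\mapsto\{h_n(f)\}$ and then extended $F$-boundedly. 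A secondary point to verify is that $M_\alpha,M_\beta$ really are $F$-bounded with $M_\alpha$ invertible, which uses that each $\Theta_s$ is well behaved under coordinatewise multiplication (solidity); this is where the standing hypotheses on the $\Theta_s$ enter.
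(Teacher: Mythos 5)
Your derivation of the pre-frame bounds for $\{h_n\}$ is sound: the two-sided estimate $\frac{1-\lambda_1}{1+\lambda_2}\||c|\|_s\leq\||Lc|\|_s\leq\frac{1+\lambda_1}{1-\lambda_2}\||c|\|_s$ follows pointwise from the displayed inequality by the triangle inequality alone, so restricting to the subspace $W=\big\{\{\alpha_n g_n(f)\}:f\in X_F\big\}$ costs nothing there, and this part is essentially the paper's computation repackaged through the multipliers $M_\alpha$, $M_\beta$ (both you and the paper implicitly need solidity of the $\Theta_s$ to trade $\{\beta_n h_n(f)\}$ for $\{h_n(f)\}$, as you correctly note). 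The genuine gap is exactly the one you flagged and did not close: the construction $V=SD^{-1}$. Lemma \ref{1.1} yields invertibility only when the Kato inequality holds for \emph{all} $x$ in the whole space --- its proof is a Neumann-series argument and needs the estimate globally. Your inequality for $L=M_\beta D M_\alpha^{-1}$ holds only on $W$; on a subspace it gives injectivity and a lower bound, hence closed range, but not surjectivity, so neither $L$ nor $D$ can be asserted invertible and $V=SD^{-1}$ is simply undefined. Nor does your fallback work: defining $V$ on the closed range of $f\mapsto\{h_n(f)\}$ by $V\{h_n(f)\}=f$ is bounded by your lower bound, but extending an \emph{operator} (not a functional) off a closed subspace of $\Theta_F$ requires that subspace to be complemented --- precisely the continuous-projection condition in the Pilipovi\'c--Stoeva theorem quoted in Section 2, which is not among the hypotheses here.

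The paper avoids inversion altogether: it sets $V=SD$, globally defined and $F$-bounded with no appeal to Lemma \ref{1.1}. For $V=SD$ to reconstruct, $D$ must map $\{h_n(f)\}\mapsto\{g_n(f)\}$, so that $V\{h_n(f)\}=S\{g_n(f)\}=f$; the theorem as printed states $D\{g_n(f)\}=\{h_n(f)\}$, evidently a slip carried over from the preceding theorem, since with that direction $V=SD$ gives $SDD\{g_n(f)\}$, which is not $f$. With the direction as literally stated, your formula $SD^{-1}$ is the formally correct one --- but the existence of $D^{-1}$ is exactly what cannot be proved from a subspace estimate. So the correct repair is not to invert $D$ but to read the hypothesis in the direction that makes $V=SD$ work; then your multiplier-plus-Kato machinery becomes unnecessary for the reconstruction step, and the frame bounds you already obtained (equivalently, the paper's direct triangle-inequality estimates, which use $\|f\|_s\leq\|S\|_s\||\{g_i(f)\}|\|_s$ and hypothesis (1) in the same way you do) complete the argument.
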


\begin{proof}
Let $W:X_F\rightarrow\Theta_F$ with $Wf:=\{g_i(f)\}$ for all $f\in
X_F$. Then $SW:\Theta_F\rightarrow\Theta_F$ is an identity
operator and for all $s\in\mathbb{N}$
\[ \|f\|_s=\|SWf\|_s\leq\|S\|_s\||\{g_i(f)\}|\|_s.
\]
Now,
\begin{eqnarray*}
\||\{(\beta_n h_n)f\}|\|_s&\leq&\||\{(\alpha_n
 g_n)f\}|\|_s+\||\{(\alpha_n g_n-\beta_n h_n)f\}|\|_s\\
&\leq&\||\{(\alpha_n
 g_n)f\}|\|_s+\lambda_s\||\{(\alpha_n
 g_n)f\}|\|_s\\
 &+&\mu_s\||\{(\beta_n h_n)f\}|\|_s+\gamma_s\|f\|_s
\end{eqnarray*}
for all $s\in\mathbb{N}$ and $f\in X_F$. Therefore
\[
(1-\mu_s)\||\{(\beta_n
h_n)f\}|\|_s\leq\big((1+\lambda_s)\|W\|_s(sup\alpha_n)+\gamma_s\big)\|f\|_s,
\]
also
\begin{eqnarray*}
(1-\mu_s)\big(\inf \beta_n\big)\||\{(
h_n)f\}|\|_s&\leq&(1-\mu_s)\||\{(\beta_n
h_n)f\}|\|_s\\
&\leq&\big((1+\lambda_s)\|W\|_s(\sup\alpha_n)+\gamma_s\big)\|f\|_s,
\end{eqnarray*} By using $(2)$, we have:
\begin{eqnarray*}
(1+\mu_s)\||\{(\beta_n
h_n)f\}|\|_s&\geq&(1-\lambda_s)\||\{(\alpha_n
 g_n)f\}|\|_s-\gamma_s\|f\|_s\\
 &\geq&\big((1-\lambda_s)\|S\|_s^{-1}(\inf
 \alpha_n)-\gamma_s\big)\|f\|_s
  \end{eqnarray*}
for all $f\in X_F$ and $s\in\mathbb{N}$. Therefore
\begin{eqnarray*}
(1+\mu_s)\big(\sup\beta_n\big)\||\{(h_n)f\}|\|_s&\geq&(1+\mu_s)\||\{(\beta_n
h_n)f\}|\|_s\\
&\geq&\big((1-\lambda_s)\|S\|^{-1}\big(\inf\alpha_n\big)-\gamma_s\big)\|f\|_s
\end{eqnarray*} for all $f\in X_F$ and $s\in\mathbb{N}$. The
above inequality shows the frame bounds. Let $V=SD$. Then $V$ is a
bounded operator that $V\{h_n(f)\}=f$, for all $f\in X_F$.
Therefore $(\{h_i\}, V)$ is a Fr\'echet frame for $X_F$ with
respect to $\Theta_F$.
\end{proof}

\begin{thm}
Let $(\{g_i\}, V)$ be a Fr\'echet frame for $X_F$ with respect to
$\Theta_F$. Suppose $\lambda_{1_s},\lambda_{2_s},\mu_s\geq0$ such
that $\max\{\lambda_{2_s},\lambda_{1_s}+\mu_s B_s\}<1$ for all
$s\in\mathbb{N}$ and $S:\Theta_F\rightarrow X_F$ a continuous
operator such that for any $\{c_i\}\in\Theta_F$ and
$s\in\mathbb{N}$
\begin{equation}\label{7}
\|S\{c_i\}-V\{c_i\}\|_s\leq\lambda_1\|V\{c_i\}\|_s+\lambda_{2_s}\|S\{c_i\}\|_s+\mu_s\||\{c_i\}|\|_s
\end{equation}
then there exists a $\{h_i\}\subseteq X^*_F$ such that
$(\{h_i\},S)$ is a  Fr\'echet frame of $X_F$ with respect to
$\Theta_F$.
\end{thm}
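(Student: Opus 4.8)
The plan is to reduce the statement to the Kato-type perturbation Lemma \ref{1.1} applied to the composition $U := SR$, where $R\colon X_F\to\Theta_F$ is the analysis operator $Rf:=\{g_i(f)\}$. Since $(\{g_i\},V)$ is a Fréchet frame, $R$ maps $X_F$ into $\Theta_F$, satisfies $A_s\|f\|_s\le\||\{g_i(f)\}|\|_s\le B_s\|f\|_s$, and obeys $V(Rf)=f$ for all $f\in X_F$; thus $VR$ is the identity on $X_F$. The first step is therefore to feed the special sequence $\{c_i\}=Rf$ into the perturbation hypothesis (\ref{7}).

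With $\{c_i\}=Rf$ one has $V\{c_i\}=VRf=f$ and $S\{c_i\}=Uf$, while $\||\{c_i\}|\|_s=\||\{g_i(f)\}|\|_s\le B_s\|f\|_s$ by the upper frame bound. Substituting these into (\ref{7}) gives, for every $f\in X_F$ and every $s$,
\[
\|f-Uf\|_s\le(\lambda_{1_s}+\mu_s B_s)\|f\|_s+\lambda_{2_s}\|Uf\|_s .
\]
The hypothesis $\max\{\lambda_{2_s},\lambda_{1_s}+\mu_s B_s\}<1$ says precisely that both constants $\lambda_{1_s}+\mu_s B_s$ and $\lambda_{2_s}$ lie in $[0,1[$, so Lemma \ref{1.1} applies to $U$ in the $s$-th norm and yields that $U$ is invertible together with the two-sided estimate
\[
\frac{1-\lambda_{2_s}}{1+\lambda_{1_s}+\mu_s B_s}\|f\|_s\le\|U^{-1}f\|_s\le\frac{1+\lambda_{2_s}}{1-\lambda_{1_s}-\mu_s B_s}\|f\|_s .
\]

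Once $U$ is invertible, I would set $h_i:=g_i\circ U^{-1}$, i.e. $h_i(f):=g_i(U^{-1}f)$. Then $h_i\in X_F^*$ and $\{h_i(f)\}=R(U^{-1}f)\in\Theta_F$, so $\{h_i\}$ is a candidate pre-F-frame; moreover $S(\{h_i(f)\})=S(R(U^{-1}f))=U(U^{-1}f)=f$, so $S$ is the reconstruction operator for $\{h_i\}$. Finally, combining the frame inequalities for $\{g_i\}$ applied to $U^{-1}f$ with the bounds on $\|U^{-1}f\|_s$ above produces the frame inequalities for $\{h_i\}$ with explicit bounds $A_s\frac{1-\lambda_{2_s}}{1+\lambda_{1_s}+\mu_s B_s}$ and $B_s\frac{1+\lambda_{2_s}}{1-\lambda_{1_s}-\mu_s B_s}$; hence $(\{h_i\},S)$ is a Fréchet frame for $X_F$ with respect to $\Theta_F$.

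The step I expect to require the most care is the legitimacy of applying Lemma \ref{1.1}, which is stated for a genuine Banach space, to the operator $U$ living on the Fréchet space $X_F$ carrying only the seminorms $\|\cdot\|_s$. I would handle this by extending $R$ and $S$ (using their $s$-boundedness, as guaranteed by the $F$-bounded frame structure) to bounded operators $U_s$ on each Banach space $X_s$, applying Lemma \ref{1.1} there, and then checking that the inverses $U_s^{-1}$ are consistent along the nested scale and map $X_F$ into $X_F$, so that $U^{-1}$ is a well-defined $F$-bounded operator on $X_F$. This compatibility across the scale of spaces, rather than the individual Banach-space estimates, is the genuine obstacle.
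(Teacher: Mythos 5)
Your proposal matches the paper's proof essentially step for step: the paper also substitutes $c_i=g_i(f)$ into (\ref{7}), defines $L(f):=S\{g_i(f)\}$ (your $U=SR$), invokes Lemma \ref{1.1} with constants $\lambda_{1_s}+\mu_s B_s$ and $\lambda_{2_s}$, sets $h_i=g_i\circ L^{-1}$, and obtains the same frame bounds $\frac{A_s(1-\lambda_{2_s})}{1+\lambda_{1_s}+\mu_s B_s}$ and $\frac{B_s(1+\lambda_{2_s})}{1-(\lambda_{1_s}+\mu_s B_s)}$. Your closing remark about justifying the Banach-space Lemma \ref{1.1} on the Fr\'echet scale by extending to each $X_s$ and checking consistency of the inverses is in fact more careful than the paper, which applies the lemma without comment.
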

\begin{proof}
For $f\in X_F$, let $c_i=g_i(f)$ in (\ref{7}), then we have
\begin{equation*}
\|S\{g_i(f)\}-V\{g_i(f)\}\|_s\leq\lambda_{1_s}\|V\{g_i(f)\}\|_s+\lambda_{2_s}\|S\{g_i(f)\}\|_s+\mu_s\||\{g_i(f)\}|\|_s
\end{equation*}
since $V(\{g_i(f)\})=f$, so
\begin{equation*}
\|S\{g_i(f)\}-f\|_s\leq\lambda_{1_s}\|f\}\|_s+\lambda_{2_s}\|S\{g_i(f)\}\|_s+\mu_s
B_S\|f|_s.
\end{equation*}
Let $L(f):=S\{g_i(f)\}$, so
$$
\|f-Lf\|_s\leq\lambda_{1_s}\|f\|_s+\lambda_{2_s}\|Lf\|_s+\mu_s
B_S\|f|_s
$$
or
$$
\|f-Lf\|_s\leq(\lambda_{1_s}+\mu_s
B_s)\|f\|_s+\lambda_{2_s}\|Lf\|_s.$$ Lemma \ref{1.1} results that
the operator $L$ is invertible and
$$
\frac{1-\lambda_{2_s}}{1+\lambda_{1_s}+\mu
B_s}\|f\|_s\leq\|L^{-1}f\|_s\leq\frac{1+\lambda_{2_s}}{1-(\lambda_{1_s}+\mu_s)
B_s}\|f\|_s
$$
and $f=LL^{-1}f=S(\{g_i(L^{-1}f)\})$. It is clear that
$\{g_i(L^{-1}f)\}\subseteq X^*_F$. By choosing $h_i=g_i \circ
L^{-1}$, we have
$$
\||\{h_i(f)\}|\|_s=\|| \{g_i(L^{-1}f)\}|\|_s\geq
A_s\||L^{-1}|\|_s\geq\frac{A_s(1-\lambda_{2_s})}{1+\lambda_{1_s}+\mu_s
B_s}\|f\|_s
$$
and
$$
\||\{h_i(f)\}|\|_s\leq B_s\|L^{-1}f\|_s\leq
\frac{B_s(1+\lambda_{2_s})}{1-(\lambda_{1_s}+\mu_s B_s)}\|f\|_s.
$$
\end{proof}
\begin{thm}
Let $(\{g_i\},S)$ be a F-frame for $X_F$ with respect to
$\Theta_F$. Let $\{h_i\}\subset X^*_F$. If for every
$s\in\mathbb{N}$ there exist  $\lambda_s,\mu_s\geq0$ such that
\begin{enumerate}
\item $\lambda_s\|U\|+\mu_s\leq\|S\|_s^{-1}$,
 \item
$\||\{(g_i-h_i)f\}|\|_{s}\leq\lambda_s\||\{(g_i)f\}|\|_s+\mu_s\|f\|_s$
for all $s\in\mathbb{N}$ and $f\in X_F$.
\end{enumerate}
Then there is a continuous operator $T$ such that $(\{h_i\}, T)$
is a F-frame for $X_F$ with respect to $\Theta_F$.
\end{thm}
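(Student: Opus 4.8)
The plan is to mimic the proof of the preceding theorem: build a perturbed analysis map $L$ on $X_F$, invert it by Kato's Lemma (Lemma \ref{1.1}), and then obtain the reconstruction operator $T$ for $\{h_i\}$ by composition. First I would extract the two immediate consequences of hypothesis (2). Since $\{g_i(f)\}\in\Theta_F$ and (2) already presupposes $\{(g_i-h_i)f\}\in\Theta_F$, the identity $\{h_i(f)\}=\{g_i(f)\}-\{(g_i-h_i)f\}$ shows $\{h_i(f)\}\in\Theta_F$ for every $f\in X_F$, settling (\ref{ferc4}). The triangle inequality together with (2) and the upper bound $B_s$ then gives
$$\||\{h_i(f)\}|\|_s\le (1+\lambda_s)\||\{g_i(f)\}|\|_s+\mu_s\|f\|_s\le\big((1+\lambda_s)B_s+\mu_s\big)\|f\|_s,$$
so $\{h_i\}$ is at least an $F$-Bessel sequence and the analysis map $Rf:=\{h_i(f)\}$ is $F$-bounded; this is the upper inequality in (\ref{ferc5}).

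Next I would set $L\colon X_F\to X_F$, $Lf:=S(\{h_i(f)\})=SRf$, which is $F$-bounded as a composition of $F$-bounded maps and hence extends to each Banach space $X_s$. The crux is the estimate, using $S(\{g_i(f)\})=f$,
$$\|f-Lf\|_s=\|S(\{(g_i-h_i)f\})\|_s\le\|S\|_s\,\||\{(g_i-h_i)f\}|\|_s\le\|S\|_s\big(\lambda_s\||\{g_i(f)\}|\|_s+\mu_s\|f\|_s\big).$$
Reading $U$ as the analysis operator $f\mapsto\{g_i(f)\}$, so that $\||\{g_i(f)\}|\|_s\le\|U\|\,\|f\|_s$, hypothesis (1) collapses the right-hand side to $\|S\|_s(\lambda_s\|U\|+\mu_s)\|f\|_s\le\|f\|_s$. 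Thus on each $X_s$ the operator $L$ satisfies the hypothesis of Lemma \ref{1.1} with $\lambda_1=\|S\|_s(\lambda_s\|U\|+\mu_s)$ and $\lambda_2=0$, whence $L$ is invertible with $F$-bounded inverse $L^{-1}$.

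Finally I would put $T:=L^{-1}S\colon\Theta_F\to X_F$, which is $F$-bounded (the $F$-bounded $S$ followed by the bounded $L^{-1}$), and verify the reconstruction identity $T(\{h_i(f)\})=L^{-1}(SRf)=L^{-1}(Lf)=f$ for all $f\in X_F$. The lower frame bound then comes for free from the operator: the inequality $\|f\|_s=\|T(\{h_i(f)\})\|_s\le\|T\|_s\,\||\{h_i(f)\}|\|_s$ yields $\||\{h_i(f)\}|\|_s\ge\|T\|_s^{-1}\|f\|_s$ with $\|T\|_s^{-1}>0$, which together with the upper bound from the first step makes $\{h_i\}$ a pre-$F$-frame. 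Hence $(\{h_i\},T)$ is a Fr\'echet frame for $X_F$ with respect to $\Theta_F$.

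I expect the only genuinely delicate point to be the invocation of Kato's Lemma: one must commit to the interpretation of $U$ as the analysis operator of $\{g_i\}$ and check that condition (1), read with the honest strict inequality, drives the perturbation coefficient $\|S\|_s(\lambda_s\|U\|+\mu_s)$ strictly below $1$ on each $X_s$ so that Lemma \ref{1.1} genuinely applies. Everything downstream—Besselness, the reconstruction formula, and the automatic lower bound—is routine bookkeeping once $L^{-1}$ is in hand.
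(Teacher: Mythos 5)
Your proof is correct and essentially identical to the paper's: the paper likewise forms the analysis operator $Vf=\{h_i(f)\}$ (your $R$), derives the Bessel bound from (2), estimates $\|I-SV\|_s\leq\|S\|_s(\lambda_s\|U\|_s+\mu_s)<1$ and inverts $SV$ (your $L$) — which, since your application of Lemma \ref{1.1} has $\lambda_2=0$, is exactly the same Neumann-series step — then takes $T=(SV)^{-1}S$ with the same reconstruction identity $T(\{h_i(f)\})=f$ and the same two-sided bounds. You were also right to flag that $U$ must be read as the analysis operator of $\{g_i\}$ and that hypothesis (1) must be taken with strict inequality for invertibility; the paper's proof tacitly does both.
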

\begin{proof} It is a direct result of $(2)$ that the operator
$V:X_F\rightarrow\Theta_F$ defined by $Vf:=\{h_i(f)\}$ is bounded
and
$$\||Uf-Vf|\|_s\leq\lambda_s\||Uf|\|_s+\mu_s\|f\|_s$$ for all
$s\in\mathbb{N}$ and $f\in X_F$. Therefore,
\begin{equation}\label{111}
\||Vf|\|_s\leq(\|U\|_s+\lambda_s\|U\|_s+\mu_s)\|f\|_s
\end{equation} for all $s\in\mathbb{N}$ and $f\in X_F$. Also, $SU=I$ imply
that
$$\|I-SV\|_s\leq\|S\|_s(\lambda_s\|U\|_s+\mu_s)<1.$$
Therefore $SV$ is invertible and
$\|(SV)^{-1}\|_s\leq\big(1-(\lambda\|U\|_s+\mu)\|S\|_s)\big)^{-1}$.
Finally $T=(SV)^{-1}S$ and $TV=I$. For all $f\in X_F$ and
$s\in\mathbb{N}$\begin{equation}\label{222}
\|f\|_s\leq\|T\|_s.\||Vf|\|_s\leq\frac{\|S\|}{1-(\lambda_s
\|U\|_s+\mu_s)\|S\|_s}\||Vf|\|_s.
\end{equation}
(\ref{111}) and (\ref{222}) which imply that for every
$s\in\mathbb{N}$ and $f\in X_F$:
$$
\frac{1-(\lambda_s \|U\|_s+\mu_s)\|S\|_s}{\|S\|_s}\|f\|_s\leq
\||\{h_i(f)\}|\|_{s}\leq(\|U\|_s+\lambda_s\|U\|_s+\mu_s)\|f\|_s.
$$
So $(\{h_i\}, T)$ is a F-frame for $X_F$ with respect to
$\Theta_F$.

\end{proof}

\textbf{Acknowledgment}: Some of the results in this paper were
obtained during the author's visiting at the  Acoustics Research
Institute, Austrian  Academy of Sciences, Austria ( Summers 2009,
2010 and 2011). He thanks this institute for hospitality. Also, he
would like to thank  Peter Balazs and D. T. Stoeva for useful
discussions, comments and suggestions. This work was partly
supported by the WWTF project MULAC ('Frame Multipliers: Theory
and Application in Acoustics; MA07-025)


\begin{thebibliography}{99}

\bibitem{AlST} A. Aldroubi, Q. Sun and W. Tang, {\it p-frames
and shift invariant subspaces of $L^p$}, J. Fourier Anal. Appl. 7
no. 1, (2001) 1-22.
%

%
%
%

%
%
%
%
%

%
%
%
%
\bibitem{caschr98}
P. G. Casazza and O. Christensen, {\it Perturbation of operators
and application to frame theory,}
 J. Fourier Anal. Appl. 3(5) (1997) 543--557.

\bibitem{CaChSt}
P. G. Casazza, O. Christensen  and D. T. Stoeva, {\it Frame
expansions in separable Banach spaces}, J. Math. Anal. Appl. 307
(2005) 710–723.


\bibitem{CaHaLa}
 P. G. Casazza, D. Han and D. Larson, {\it Frames for Banach spaces},
Contemp. Math. 247 (1999) 149–182.
%







%
%
%
%
%
%
\bibitem{ole}O. Christensen,  {\it An Introduction to Frame and
Riesz Bases,} Birkhauser 2002.

\bibitem{ole1} O. Christensen, {\it Frame perturbations, } Proc. Amer. Math. Soc. 123
(1995) 1217–1220.

\bibitem{ole2} O. Christensen, {\it A Paley–Wiener theorem for frames,} Proc. Amer.
Math. Soc. 123 (1995) 2199–2201.




%








%
%

%
%
%
%
%
%
%
\bibitem{DS} R. J. Duffin and A. C. Schaeffer, {\it A class
of nonharmonic Fourier series,} Trans. Amer. Math. Soc.  72 (1952)
 341-366.
%






%
%


%

%

%
%
%
%
%
%
\bibitem{gr}
 K. Gr\"{o}chenig, {\it Describing functions: atomic decompositions
versus frames,} Monatsh. Math. 112 (1) (1991) 1–42.

%
%

%
%

%
%
%

%


 \bibitem{jkv}
P. K. Jain, Sh. K. Kaushik and L. K. Vashisht, {\it On Stability
of Banach Frames,}
\newblock {\em Bull. Korean Math. Soc. } 44 (2007)  73-81.
\bibitem{kato}
T. Kato, {\it Perturbation Theory for Linear Operators,},
Springer, New York, 1976.
%

%
%
%


%
%
%
%
%
%

%
%

%

\bibitem{meise}
R. Meise, D. Vogt, {\it Introduction to Functional Analysis,}
Clarendon Press, Oxford, 1997.
%

%
\bibitem{NaRa1}
A. Najati and A. Rahimi, {\it Generalized frames in Hilbert
spaces,} Bull. Iran. Math. Soc. Vol. 35 No. 1 (2009) 97-109.




\bibitem{phst1}
S. Pilipovi\'c, D.T. Stoeva and N. Teofanov, {\it Frames for
Fr´echet spaces,} Bull. Cl. Sci. Math. Nat., 32 (2007) 69–84.

\bibitem{phst2}
S. Pilipovi\'c and D.T. Stoeva, {\it Series expansions in Fr´echet
spaces and their duals; construction of Fr´echet frames,} Journal
of Approximation Theory 163 (2011) 1729–1747.

%

\bibitem{rahimibook}
 A. Rahimi, {\it Frames and Their Generalizations in Hilbert and Banach Spaces
,} Lambert Academic Publishing, 2011.

\bibitem{ranade} A. Rahimi, A. Najati and Y. N. Dehghan, {\it Continuous frames in
Hilbert spaces,} Meth. Func. Anal. Top. Vol. 12, No. 2 (2006)
170-182.
%

%
%
%

%
%

%

\bibitem{stoeva09}
D. T. Stoeva, {\it Perturbation of frames in Banach spaces,}
\newblock arXiv:0902.3602
(2009).



%
%










\end{thebibliography}
\end{document}